\newtheorem{Thm}{Theorem}[section]
\newtheorem{Lem}[Thm]{Lemma}
\newtheorem{proposition}[Thm]{Proposition}
\newtheorem{theorem}[Thm]{Theorem}
\newtheorem{Cor}[Thm]{Corollary}
\newtheorem{Rem}[Thm]{Remark}
\newtheorem{example}[Thm]{Example}
\newtheorem{Ass}[Thm]{Assumption}
\newcommand{\EqD}{\overset{d}{=}}
\newcommand{\cl}{\mathcal}
\newcommand{\bb}{\mathbb}
\newcommand{\E }{\mathbb{E}}
\newcommand{\pr}{\mathbb{P}}
\begin{document}
\centerline{\large \bf Tail Adversarial Stability for Regularly Varying Linear Processes}
\centerline{\large \bf and their Extensions}
\centerline{Shuyang Bai\footnote{Corresponding author.}(\texttt{bsy9142@uga.edu}) and Ting Zhang (\texttt{tingzhang@uga.edu})}
\centerline{\em University of Georgia}
\centerline{\today}

\begin{abstract}
The notion of tail adversarial stability has been proven useful in obtaining limit theorems for tail dependent time series. Its implication and advantage over the classical strong mixing framework has been examined for max-linear processes, but not yet studied for additive linear processes. In this article, we fill this gap by verifying the tail adversarial stability condition for regularly varying additive linear processes.  We in addition consider extensions of the result to a stochastic volatility generalization and to a max-linear counterpart. We also address the invariance of tail adversarial stability under monotone transforms.   Some  implications for limit theorems in statistical context are also discussed.\\
\\
Keywords: time series, linear processes, moving average, power laws, regularly varying distributions, tail dependence.\\
MSC Classification: 62M10, 62G32.
\end{abstract}

\section{Introduction}\label{sec:introduction}
Compared with the conventional notion of correlation-based dependence that mainly concerns co-movements around the mean, tail dependence or extremal dependence refers to the dependence in the joint extremes that mainly concerns the co-occurrence of tail events. In bivariate or finite-dimensional distributions, the concept of tail dependence and its quantification have been extensively explored in the literature; see for example \citet{Sibuya:1960}, \citet{deHaan:Resnick:1977}, \citet{Joe:1993}, \citet{Ledford:Tawn:1996}, \citet{Coles:Heffernan:Tawn:1999}, \citet{Embrechts:Mcneil:Straumann:2002}, \citet{Draisma:Drees:Ferreira:DeHaan:2004}, \citet{Poon:Rockinger:Tawn:2004}, \citet{McNeil:Frey:Embrechts:2005}, \citet{Zhang:2008}, \citet{Balla:Ergen:Migueis:2014}, \citet{Hoga:2018}  and references therein. In the time series setting, \citet{Leadbetter:Lindgren:Rootzen:1983}, \citet{Smith:Weissman:1994} and \citet{Ferro:Segers:2003} considered the use of an extremal index to describe the degree of tail dependence. \citet{Zhang:2005} proposed to extend the bivariate tail dependence index of \citet{Sibuya:1960} to the time series setting, and \citet{Linton:Whang:2007} considered a variant that uses  a quantile as the tail threshold; see also the extremogram of \citet{Davis:Mikosch:2009} and \citet{Hill:2009}, the tail autocorrelation of \citet{Zhang:2022}, as well as  the notion of tail process in \citet{basrak2009regularly}    which characterizes the local tail dependence structure  in a stationary regularly varying time series.
 We also refer to a  recent review paper   \citet{ZhangZ:2021} and  monograph \citet{kulik2020heavy} for additional discussions and references.

Although various tail dependence measures have been proposed, most of them focused on summarizing the degree of tail dependence in   observed data and few is useful for developing limit theorems of tail dependent time series. This is similar to the correlation-based dependence case, in which the correlation coefficient is prominent in summarizing the underlying degree of dependence, but generally does not lead to sufficient conditions for limit theorems to be developed. To achieve the latter, a popular approach is due to the influential work of \citet{Rosenblatt:1956} which introduced the notion of strong mixing along with a ``big block-small block'' argument that have led to the development of various limit theorems under the strong mixing condition. Although not being originally developed for handling dependence in the tail, the strong mixing framework has been applied to the tail setting as a major tool for developing limit theorems; see for example \citet{Smith:Weissman:1994}, \citet{Drees:2003}, \citet{Ferro:Segers:2003}, \citet{Chernozhukov:2005}, \citet{Davis:Mikosch:2009}, \citet{Chernozhukov:FernandezVal:2011}, \citet{Davis:Mikosch:Cribben:2012}, \citet{Mikosch:Zhao:2014}, \cite{kulik2020heavy}, and references therein. To handle tail statistics from time series data, however, the strong mixing condition often has to be used together with additional anti-clustering conditions that control more specifically the degree of dependence in the tail; see for example condition (9.67) of \citet{Chernozhukov:2005}, condition (3.3) of \citet{Davis:Mikosch:2009}, Assumption 4 of \citet{Chernozhukov:FernandezVal:2011}, and condition (2.4) of \citet{Mikosch:Zhao:2014},  among others. Such conditions 
may lead to more restrictions on the underlying dependence than the strong mixing condition itself.

Recently, \citet{Zhang:2021} introduced the notion of tail adversarial stability, which provides an alternative framework for developing asymptotic theories of  analysis of  tail dependent time series. Compared with the traditional strong mixing framework that involves a supremum distance between two sigma algebras, the tail adversarial stability framework of \citet{Zhang:2021} relies on the tail adversarial effect of coupled innovations expressed as a conditional probability, which is much more tractable. It has been shown in \citet{Zhang:2021} and \citet{Zhang:2022} that the tail adversarial stability measure can be easily calculated for the max-linear (or say moving-maximum) processes (see, e.g., \cite{hsing1986extreme} and \citet{Hall:Peng:Yao:2002}), and can lead to cleaner and weaker conditions than the traditional strong mixing framework. Besides the max-linear process and its variants covered in the recent review of \citet{ZhangZ:2021}, the additive autoregressive and moving-average (ARMA) model with heavy-tailed innovations has also been a popular choice for modeling time series with tail dependence. The practical meaning of tail adversarial stability for such additively structured processes, however, has not been addressed so far. This article aims to fill this gap by verifying the tail adversarial stability condition for regularly varying (additive) linear processes, or say moving-average processes.  Note that the causal ARMA processes are   covered due to their moving-average representations. In particular, we develop probabilistic bounds that are uniform across all lags at which the underlying process is decoupled (Lemma \ref{Lem:control} and Corollary \ref{Cor:key}). Based on these  bounds, we then obtain a bound of the tail adversarial stability measure in terms of the coefficients of the linear process (Theorem \ref{Thm:linear}). We hence are able to verify the tail adversarial stability condition under relatively mild assumptions (Corollary \ref{Cor:TAS Linear}). Parallel results for their stochastic volatility extensions are presented in Theorem \ref{Thm:Vol} and Corollary \ref{Cor:TAS cond stoch vol}. We also revisit the the class of max-linear processes, which extends the additive linear process to its maximal counterpart.

The remaining of the article is organized as follows. Section \ref{sec:TAS} reviews the tail adversarial stability measure. Section \ref{sec:linearprocesses} provides an explicit calculation of the tail adversarial stability measure for regularly varying linear processes. Section \ref{sec:stochasticvolatilitymodel} concerns an extension to stochastic volatility type models that are driven by regularly varying linear processes. Section \ref{sec:mmrevisit} revisits the max-linear process when the innovations are general regularly varying random variables. Section \ref{sec:mono} addresses the invariance   of tail adversarial stability under monotone transforms. Section \ref{application} discusses some  implications of our results in statistical context. Section \ref{sec:conclusion} concludes the paper.

\section{Tail Adversarial Stability}\label{sec:TAS}
The notion of tail adversarial stability is formulated for a stationary process $X=(X_i)$ of the following form
\begin{equation}\label{eqn:gen proc}
X_i = g(e_i,e_{i-1},\ldots,e_1,e_0,e_{-1},\ldots),   \quad i =0,1,2,\ldots,
\end{equation}
  where  $g$ is a measurable function,  $(e_j)_{j\in \bb{Z}}$ is a sequence of independent and identically distributed (i.i.d.) random variables, or more generally, random elements such as random vectors.  The framework \eqref{eqn:gen proc} covers a wide range of  common linear and nonlinear time series models.
Let $e_0^*$ be an innovation that has the same distribution as $e_0$ but independent of $(e_j)_{j \in \mathbb Z}$. Then
\begin{equation*}
X_i^* = g(e_i,e_{i-1},\ldots,e_1,e_0^*,e_{-1},\ldots),   \quad i \ge 0
\end{equation*}
represents the coupled observation at time $i$  whose innovation at time zero is replaced by an i.i.d.\ copy. The difference between $X_i$ and the coupled version $X_i^*$, measured through, e.g., the $L^p$ norm $\|X_i-X_i^*\|_p$, leads to the  \emph{physical or functional dependence measure}   formulated by \citet{Wu:2005}. It has since been applied extensively for establishing various  asymptotic results for analysis of dependent data; see for example \citet{Wu:2005}, \citet{Wu:2007:SIP}, \citet{Liu:Wu:2010}, \citet{Zhou:Wu:2010}, \citet{Zhang:Wu:2011}, \citet{Zhang:2013}, and \citet{Zhang:2015}, among others.

Let $\mathcal{U}_X=\inf\{x\in \mathbb{R}:\ \pr(X_0\le x)=1\}\in (-\infty,+\infty]$ be the upper end point of the marginal distribution of $X_0$.
 Now the \emph{tail adversarial stability} (TAS) measure, first introduced in \citet{Zhang:2021} (although in a triangular array setting), is given by
\begin{equation}\label{eq:TAS measure}
\theta_y(i)= \theta_y^{(X)}(i) = \sup_{z \geq y} \pr(X_i^* \leq z \mid X_i > z),\quad y<\mathcal{U}_X,
\end{equation}
which quantifies the adversarial effect of a perturbation of the  innovation $e_0$ has on whether the observed data at time $i$ is an upper-tail observation. Note that if $X_i$ does not depend on $e_0$, then $X_i^* = X_i$ and $\theta_y(i) = 0$, meaning that $e_0$ will not have any tail adversarial effect on $X_i$. Let
\begin{equation}\label{eq:TAS stable measure}
\Theta_{y,q}= \Theta_{y,q}^{(X)} = \sum_{i=0}^\infty \{\theta_y(i)\}^{1/q},\quad q>0,
\end{equation}
which measures the cumulative tail adversarial effect of $e_0$ on all future observations. Then the process $(X_i)$ is said to be  \emph{tail adversarial $q$-stable} or $(X_i) \in \mathrm{TAS}_q$, if
\begin{equation}\label{eq:TAS}
\lim_{y \uparrow \mathcal{U}_X} \Theta_{y,q} < \infty.
\end{equation}

\citet{Zhang:2021} obtained the consistency and central limit theorem for high quantile regression estimators when the underlying process is $\mathrm{TAS}_2$. \citet{Zhang:2022} established the consistency and central limit theorem for sample tail autocorrelations when $(X_i) \in \mathrm{TAS}_q$ for some $q > 4$. We anticipate that more statistical theories for analysis of tail-dependent time series can be developed under the TAS framework.

In this article,  we shall mostly consider the case where  $\pr(X_0>z)>0$ for all $z>0$, that is, the distribution of $X_i$ is unbounded on the positive side so that $\mathcal{U}_X=\infty$, for which it suffices to consider $y>0$ in \eqref{eq:TAS measure}.


\section{Regularly Varying Linear Processes}\label{sec:linearprocesses}
\subsection{Basic Setup}\label{sec:basic setup}
Let $(a_j)_{j\ge 0}$ be a sequence of real coefficients, we consider the additive linear process
\begin{equation}\label{eq:linear proc}
X_i = \sum_{j=0}^\infty a_j \epsilon_{i-j},   \quad i \in\bb{Z},
\end{equation}
where $(\epsilon_j)_{j\in \bb{Z}}$ is a sequence of i.i.d.\ innovation random variables.  It is also known as a  (possibly infinite-order) moving-average process, which includes finite-order ARMA processes as special cases due to their moving-average representations. The process (\ref{eq:linear proc}) is covered by the form in (\ref{eqn:gen proc}) when we set the function $g$ to be linear which is measurable (see, e.g., \citet[Example 2.1.9]{samorodnitsky:2016:stochastic}),  and the random elements $e_i = \epsilon_i$, $i \in \mathbb Z$.

We assume that $|\epsilon_0|$ is regularly varying with index $-\nu$, that is,
\begin{equation}\label{eq:abs RV}
\pr(|\epsilon_0| > x) = x^{-\nu} \ell(x),
\end{equation}
where the function $\ell(\cdot)$   is slowly varying at $\infty$, namely, $\ell(\cdot)$ is a positive function such that $\lim_{z\rightarrow\infty} \ell(\lambda z)/\ell(z)=1$ for any $\lambda>0$.
 See \citet{bingham:goldie:teugels:1989:regular} for more   details about regularly and slowly varying functions. In addition, we assume that the  distribution of $\epsilon_0$ satisfies a   tail balance condition:
\begin{equation}\label{eq:tail balance}
\lim_{x \to \infty} {\pr(\epsilon_0 > x) \over \pr(|\epsilon_0| > x)} = p
\end{equation}
for some $p \in [0,1]$. This is equivalent to assuming that $\lim_{x\rightarrow\infty}\pr(\epsilon_0<-x)/\pr(|\epsilon_0|>x)=1-p$. A distribution of $\epsilon_0$ satisfying both \eqref{eq:abs RV} and \eqref{eq:tail balance} is often known as being \emph{balanced regularly varying}, where $p$ is  the \emph{tail balance parameter}.
 Note that if $\epsilon_0$ comes from a positive distribution such as the Fr\'{e}chet distribution, then for any $x > 0$, $\pr(\epsilon_0 > x) = \pr(|\epsilon_0| > x)$ and hence it has  $p = 1$.

 On the other hand, for the coefficient sequence $(a_j)$ in (\ref{eq:linear proc}), we assume that for some $\varepsilon>0$,
\begin{equation}\label{eq:summability}
\sum_{j=0}^\infty |a_j|^{\frac{\nu}{\nu +2}-\varepsilon  }<\infty.
\end{equation}
Since the exponent $\frac{\nu}{\nu +2}-\varepsilon<\min(1,\nu)$,   the random   series in (\ref{eq:linear proc}) converges almost surely and hence the resulting linear process is well defined; see for example \citet[Corollary 4.2.12]{samorodnitsky:2016:stochastic}. It is worth noting that as $\nu\rightarrow\infty$ and $\varepsilon\rightarrow 0$, the condition \eqref{eq:summability} approaches  $\sum_{j=0}^\infty |a_j|<\infty$, a well-known condition of short memory for linear processes.  The    condition   \eqref{eq:summability} is imposed for establishing  Lemma \ref{Lem:control}, an important uniform estimate for verifying the TAS condition for general regularly varying linear processes. On the other hand, we shall   mention    in Remark \ref{Rem:uniform bound stable distr}  below that when $(\epsilon_j)$ are $\nu$-stable innovations (here $\nu\in (0,2)$), the restriction \eqref{eq:summability} can be relaxed.
We also  assume without loss of generality  that $a_j \neq 0$ for infinitely many $j \geq  1$. The case when $a_j \neq 0$ for  finitely many $j\ge 1$ degenerates to an $m$-dependent process, for which $\Theta_{y,q}  \leq m+1$ and the process  trivially belongs to $\mathrm{TAS}_q$ for any $q > 0$.

Given the assumptions made above, it is known (e.g., \citet[Corollary 4.2.12]{samorodnitsky:2016:stochastic}) that  each $ X_i $ is also balanced regularly varying with index $-\nu$ and
\begin{equation}\label{eq:tail linear}
\lim_{x\rightarrow\infty}\frac{\pr(X_0>x)}{\pr(|\epsilon_0|>x)} =  \sum_{j\ge 0} \left\{ p (a_j)_+^{\nu} + (1-p) (a_j)_-^{\nu} \right\}
\end{equation}
as $x\rightarrow\infty$, where $(\cdot )_+$ and $(\cdot)_-$ stand for positive and negative part respectively.

 We   make an additional mild  assumption: the density $f_\epsilon$ of $\epsilon_0$ exists and satisfies for  any    $\delta\in (0,\nu+1)$, there exists a constant $c_0>0$, such that
\begin{equation}\label{eq:f_eps assumption}
 f_\epsilon(x)  \le c_0  \min(1,|x|^{-\nu -1+\delta}).
\end{equation}

\begin{Rem}
In view of Karamata's Theorem (\citet[Proposition 1.5.8]{bingham:goldie:teugels:1989:regular}) and Potter's bound (\citet[Theorem 1.5.6]{bingham:goldie:teugels:1989:regular}), all the assumptions made so far on $\epsilon_0$ are satisfied if $f_\epsilon$ is bounded, and either both $f_\epsilon(x)$ and $f_\epsilon(-x)$ are regularly varying with index $-\nu-1$ as $x\rightarrow\infty$  with $\lim_{x\rightarrow\infty}f_\epsilon(x)/f_\epsilon(-x)$ existent and positive, or  $f_\epsilon(x)$ is regularly varying with index $-\nu-1$ on one side and is of smaller order on the other side (which corresponds to $p=0$ or $1$ in \eqref{eq:tail balance}). These conditions cover  a broad family of  power-law distributions  such as Pareto, Fr\'echet, Student-t, F-distributions (with the numerator degree of freedom $\ge 1$) and  non-Gaussian  stable distributions (including Cauchy).
\end{Rem}

\subsection{Preparations}

Throughout the article we use   $c$ to denote a generic positive constant whose value   may change from one expression to another.  In this section, we   collect some important auxiliary results   we need for the rest of the article.

The following lemma collects some variants of the Potter's bound   useful for handling regularly varying tails.  Recall a random variable $Z\ge 0$ is said to be regularly varying with index $-\nu$, $\nu>0$, if $\lim_{z\rightarrow\infty} \pr(Z>\lambda z)/\pr(Z>z)=\lambda^{-\nu}$ for any $\lambda>0$.
\begin{Lem}\label{Lem:Potter}
Suppose random variable $Z\ge 0$ is regularly varying with index $-\nu$, $\nu>0$.
Given any fixed $\varepsilon>0$ (and in addition $\varepsilon<\nu$ for \eqref{eq:Potter upper} below), $z_0>0$ and $x_0>0$, there exists a constant  $c>0$, such that
\begin{equation}\label{eq:Potter upper}
\pr(Z>z)\le c z^{-\nu+\varepsilon}, \quad z>0,
\end{equation}
\begin{equation}\label{eq:Potter lower}
\pr(Z>z)\ge c z^{-\nu-\varepsilon}, \quad z>z_0,
\end{equation}
and
\begin{equation}\label{eq:Potter ratio}
\frac{\pr(xZ>z)}{\pr(Z>z)}\le c x^{\nu-\varepsilon}, \quad z>z_0,\quad  x\in[0, x_0].
\end{equation}
\end{Lem}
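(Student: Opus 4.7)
The plan is to derive all three estimates as direct consequences of Potter's theorem (\citet[Theorem 1.5.6]{bingham:goldie:teugels:1989:regular}) applied to the slowly varying factor of the tail $\bar F(z):=\pr(Z>z)=z^{-\nu}\ell_Z(z)$. Recall that for any fixed $\delta>0$, Potter's bound furnishes a threshold $X_0>0$ and constants $c_\delta,C_\delta>0$ such that
\[
c_\delta\min\{(u/v)^{\delta},(u/v)^{-\delta}\}\le \ell_Z(u)/\ell_Z(v)\le C_\delta\max\{(u/v)^{\delta},(u/v)^{-\delta}\},\qquad u,v\ge X_0.
\]
Throughout, I would specialize $\delta=\varepsilon$.

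For \eqref{eq:Potter upper}, fixing a reference value in the upper bound yields $\ell_Z(z)\le c z^{\varepsilon}$ for $z\ge X_0$, so $\bar F(z)\le c z^{-\nu+\varepsilon}$ on this range. For $z\in(0,X_0]$, the assumption $\varepsilon<\nu$ forces $z^{-\nu+\varepsilon}$ to diverge as $z\downarrow 0$, so the trivial bound $\bar F(z)\le 1$ can be absorbed after enlarging the constant. Inequality \eqref{eq:Potter lower} is the symmetric statement: the lower Potter bound gives $\ell_Z(z)\ge c z^{-\varepsilon}$ for $z\ge X_0$, hence $\bar F(z)\ge c z^{-\nu-\varepsilon}$, after enlarging $z_0$ to exceed $X_0$ if necessary.

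For \eqref{eq:Potter ratio}, I would rewrite
\[
\frac{\pr(xZ>z)}{\pr(Z>z)}=\frac{\bar F(z/x)}{\bar F(z)}=x^{\nu}\,\frac{\ell_Z(z/x)}{\ell_Z(z)}
\]
for $x>0$ (both sides vanish when $x=0$). The idea is to choose $z_0$ so large that $z_0/x_0\ge X_0$, which guarantees that both $z$ and $z/x$ lie in the Potter range uniformly in $(x,z)\in(0,x_0]\times[z_0,\infty)$. I would then split into two regimes. If $x\in(0,1]$, then $z/x\ge z$ and the upper Potter bound gives $\ell_Z(z/x)/\ell_Z(z)\le C_\varepsilon x^{-\varepsilon}$, so the ratio is bounded by $C_\varepsilon x^{\nu-\varepsilon}$. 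If $x\in(1,x_0]$, then $z/x<z$ and Potter gives $\ell_Z(z/x)/\ell_Z(z)\le C_\varepsilon x^{\varepsilon}$, so the ratio is at most $C_\varepsilon x^{\nu+\varepsilon}=C_\varepsilon x^{2\varepsilon}\cdot x^{\nu-\varepsilon}\le C_\varepsilon x_0^{2\varepsilon}\cdot x^{\nu-\varepsilon}$, which is again of the required form after enlarging the constant.

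The main (and essentially only) delicate step is synchronizing the threshold choices so that Potter's bound is valid uniformly in $x\in[0,x_0]$; once $z_0\ge x_0 X_0$ is enforced the argument is routine. I do not expect any technical obstacles beyond this bookkeeping.
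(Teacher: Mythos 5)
Your proposal is correct in substance, but it takes a different (more self-contained) route than the paper: the paper disposes of this lemma in one line by citing \citet[Propositions 1.4.1 and 1.4.2]{kulik2020heavy}, which state exactly these Potter-type estimates, whereas you rederive them from Potter's theorem for the slowly varying factor $\ell_Z$. Your derivation is essentially the content of those cited propositions, and all three steps are sound: the extension of \eqref{eq:Potter upper} to $z\in(0,X_0]$ via $\bar F\le 1$ and $\varepsilon<\nu$, and the two-regime split $x\le 1$ versus $x\in(1,x_0]$ for \eqref{eq:Potter ratio}, are exactly right. The one point to phrase more carefully is your treatment of $z_0$: in the lemma $z_0$ is part of the \emph{given} data, so you may not ``enlarge $z_0$''; instead, having proved \eqref{eq:Potter lower} and \eqref{eq:Potter ratio} for $z>z_0':=\max(X_0,x_0X_0,z_0)$, you must extend them down to the prescribed range $z>z_0$ by adjusting the constant $c$. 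This is routine: on the compact range $z\in(z_0,z_0']$ the survival function satisfies $\bar F(z)\ge \bar F(z_0')>0$ by monotonicity and the eventual positivity forced by regular variation, which handles \eqref{eq:Potter lower}; for \eqref{eq:Potter ratio} on that range combine this lower bound with the already-established global estimate $\bar F(z/x)\le c(z/x)^{-\nu+\varepsilon}\le c\,x^{\nu-\varepsilon}z_0^{-\nu+\varepsilon}$. With that bookkeeping fix the argument is complete; what the paper's citation buys is brevity, while your version makes the dependence of the constants on $\varepsilon$, $z_0$ and $x_0$ explicit.
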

\begin{proof}
The lemma follows readily from  \citet[Propositions 1.4.1 and 1.4.2]{kulik2020heavy}.
\end{proof}

We also need the following fact on the (truncated) moments of regularly varying random variables.   Below and throughout, we write $E[Z;A]=\E[Z1_A]$ for random variable $Z$, event $A$ and indicator $1_A$.
\begin{Lem}\label{Lem:moment RV}
Suppose random variable $Z\ge 0$ is regularly varying with index $-\nu$, $\nu>0$.  If $\beta\in (0,\nu)$, then $\E[Z^\beta]<\infty$; if $\beta>\nu$, then
\begin{equation*}
\lim_{z\rightarrow\infty}\frac{\E[Z^\beta; Z\le  z]}{z^{\beta}\pr(Z>z)}=\frac{\nu}{\beta-\nu}  >0;
\end{equation*}
In addition,
\begin{equation*}
  \lim_{z\rightarrow\infty}\frac{\E[Z^\nu; Z\le  z]}{z^{\nu+\varepsilon}\pr(Z>z)}=0
\end{equation*}
for any $\varepsilon>0$.
\end{Lem}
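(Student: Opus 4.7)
The plan is to reduce all three assertions to the layer-cake identity
$$\E[Z^\beta; Z\le z] = \beta\int_0^z u^{\beta-1}\pr(Z>u)\,du - z^\beta \pr(Z>z),$$
obtained by writing $\E[Z^\beta; Z\le z] = \int_0^{z^\beta}\pr(t^{1/\beta}<Z\le z)\,dt$ and substituting $u=t^{1/\beta}$ (so that $dt=\beta u^{\beta-1}du$), then splitting $\pr(u<Z\le z)=\pr(Z>u)-\pr(Z>z)$. Each of the three claims then follows from asymptotic analysis of the tail integral $\int_0^z u^{\beta-1}\pr(Z>u)\,du$ via Karamata's theorem together with the Potter bounds from Lemma \ref{Lem:Potter}.

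For the first claim, I would simply let $z\to\infty$ in the identity and check that the resulting integral is finite. By \eqref{eq:Potter upper} applied with some $\varepsilon'\in(0,\nu-\beta)$, the integrand $u^{\beta-1}\pr(Z>u)$ is bounded by $c\,u^{\beta-1-\nu+\varepsilon'}$ for large $u$, which is integrable at infinity; integrability near $0$ is immediate since $\pr(Z>u)\le 1$ and $\beta>0$. Hence $\E[Z^\beta]=\beta\int_0^\infty u^{\beta-1}\pr(Z>u)\,du<\infty$.

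For the second claim, the integrand $u\mapsto u^{\beta-1}\pr(Z>u)$ is regularly varying with index $\beta-1-\nu>-1$ at infinity, so Karamata's theorem (e.g., \citet[Proposition 1.5.8]{bingham:goldie:teugels:1989:regular}) yields
$$\int_0^z u^{\beta-1}\pr(Z>u)\,du \sim \frac{z^\beta\pr(Z>z)}{\beta-\nu}$$
as $z\to\infty$. Substituting into the identity and dividing by $z^\beta \pr(Z>z)$ produces $\beta/(\beta-\nu)-1=\nu/(\beta-\nu)$, as asserted.

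For the third (boundary) claim, the integrand becomes $u^{-1}\ell(u)$, and Karamata's theorem in the index-$(-1)$ case only guarantees that $\int_1^z u^{-1}\ell(u)\,du$ is slowly varying at infinity; combined with the fact that $z^\nu\pr(Z>z)=\ell(z)$ is itself slowly varying, both terms on the right of the identity are of order at most slowly varying. Dividing by $z^{\nu+\varepsilon}\pr(Z>z)=z^{\varepsilon}\ell(z)$ then produces the quotient of a slowly varying function by $z^\varepsilon$ times a slowly varying function, which tends to $0$ because any positive power of $z$ dominates any slowly varying function at infinity. The only mild subtlety I expect is this slow-variation bookkeeping at the boundary $\beta=\nu$; otherwise the argument is a textbook application of Karamata's theorem and Potter's bound, and I do not anticipate a serious obstacle.
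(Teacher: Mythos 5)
Your proposal is correct and is essentially the paper's approach written out in full: the paper disposes of the lemma by citing \citet[Proposition 1.4.6]{kulik2020heavy} for the first two claims and for the slow variation of $\E[Z^\nu;Z\le z]$, then invokes \citet[Proposition 1.3.6]{bingham:goldie:teugels:1989:regular} to kill the extra $z^{\varepsilon}$, and your layer-cake identity plus Karamata/Potter derivation is precisely the standard proof of those cited facts (including the slow-variation bookkeeping at $\beta=\nu$). No gaps; your version is just self-contained where the paper delegates to references.
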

\begin{proof}
The first two claims directly follows from \citet[Proposition 1.4.6]{kulik2020heavy}. By  \citet[Proposition 1.4.6]{kulik2020heavy} again, $\E[Z^\nu; Z\le z]$ is slowly varying as $z\rightarrow\infty$, and so is $z^{\nu}\pr(Z>z)$. Hence the last conclusion follows from   \citet[Proposition 1.3.6]{bingham:goldie:teugels:1989:regular}.
\end{proof}

Following Section \ref{sec:TAS}, let $\epsilon_0^*$ be a random variable with the same distribution as $\epsilon_0$ but independent of $(\epsilon_j)_{j \in \mathbb Z}$. Then the coupled version of $X_i$ is
\begin{equation}\label{eq:X_i* linear}
X_i^* = X_i-a_i\epsilon_0+a_i\epsilon_0^* = a_0\epsilon_i + \cdots + a_{i-1}\epsilon_1 + a_i\epsilon_0^* + a_{i+1}\epsilon_{-1} + \cdots
\end{equation}
Introduce
\begin{equation}\label{eq:Y_i}
Y_i=  X_i-a_i\epsilon_0 =\sum_{j\ge 0,\, j\neq i} a_j\epsilon_{i-j}.
\end{equation}
and hence
\[
X_i= Y_i + a_i \epsilon_0,\quad X_i^*= Y_i + a_i \epsilon_0^*.
\]
Below we  develop a uniform estimate  for the densities of $\{Y_i\}$ which will be the key for establishing the main results.  The condition \eqref{eq:summability} plays an important role in an infinite-order induction argument.

 \begin{Lem}\label{Lem:control}
Fix any $\delta\in (0,\nu+1)$. Suppose that $a_j\neq 0$ for infinitely many $j\ge 0$. Under the assumptions \eqref{eq:abs RV}, \eqref{eq:tail balance}, \eqref{eq:summability} and \eqref{eq:f_eps assumption}.
The density $f_i$ of each $Y_i$, $i\ge 0$, exists, and we have for all $x\in \bb{R}$ and $i\ge 0$,
\begin{equation}\label{eq:f_i bound second}
f_i(x)   \le
  c \min(1, |x|^{-\nu-1+\delta})
\end{equation}
 for some positive constant  $c>0$ that does not depend on $i$ or $x$.
\end{Lem}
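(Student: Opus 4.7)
The plan is to first establish the density bound for the finite truncation $Y_i^{(n)}=\sum_{k=1}^n b_k\epsilon_k'$, where $(b_k)_{k\geq 1}$ is a reordering of the nonzero $a_j$'s ($j\neq i$) by decreasing magnitude and $(\epsilon_k')$ are the corresponding i.i.d.\ innovations, with a constant independent of $n$ and $i$, and then transfer the bound to $Y_i$ by a limiting argument. A useful preliminary reduction is that $\min(1,|x|^{-\nu-1+\delta})$ is nondecreasing in $\delta$ (strictly so on $|x|>1$), so any bound established for a smaller $\delta$ yields the same bound with the same constant for every larger $\delta$; it therefore suffices to prove the lemma for any sufficiently small $\delta>0$ of our choosing.

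The inductive step is a careful convolution estimate. Write $f_i^{(n)}$ for the density of $Y_i^{(n)}$ and $g_{n+1}(y)=|b_{n+1}|^{-1}f_\epsilon(y/b_{n+1})$ for the density of $b_{n+1}\epsilon_{n+1}'$, which by \eqref{eq:f_eps assumption} satisfies $g_{n+1}(y)\leq c_0|b_{n+1}|^{\nu-\delta}|y|^{-\nu-1+\delta}$ for $|y|\geq|b_{n+1}|$ and $g_{n+1}(y)\leq c_0/|b_{n+1}|$ always. Assuming inductively $f_i^{(n)}(x)\leq K_n\min(1,|x|^{-\nu-1+\delta})$, I split the convolution $f_i^{(n+1)}=f_i^{(n)}*g_{n+1}$ at $|y|=\mu|x|$ with $\mu=|b_{n+1}|^\gamma$ for a $\gamma>0$ to be chosen. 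On $|y|\leq\mu|x|$ one has $|x-y|\geq(1-\mu)|x|$ so $f_i^{(n)}(x-y)\leq K_n((1-\mu)|x|)^{-\nu-1+\delta}$; on $|y|>\mu|x|\geq|b_{n+1}|$, $g_{n+1}(y)\leq c_0|b_{n+1}|^{\nu-\delta}(\mu|x|)^{-\nu-1+\delta}$. Using $\int g_{n+1}=\int f_i^{(n)}=1$, the expansion $(1-\mu)^{-\nu-1+\delta}\leq 1+c_1\mu$ valid once $|b_{n+1}|$ is small enough (finitely many initial steps are absorbed into the base constant), and the trivial $\|f_i^{(n+1)}\|_\infty\leq\|f_i^{(n)}\|_\infty$ to handle the bounded range of $|x|$, the recursion becomes
\[
K_{n+1}\leq K_n\bigl(1+c_1|b_{n+1}|^\gamma\bigr)+c_2|b_{n+1}|^{\nu-\delta-\gamma(\nu+1-\delta)}.
\]

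Set $\alpha^*=\nu/(\nu+2)-\varepsilon$. A Gr\"onwall-type iteration yields a uniform bound $K_n\leq K$ whenever $\sum_k|b_k|^\gamma<\infty$ and $\sum_k|b_k|^{\nu-\delta-\gamma(\nu+1-\delta)}<\infty$; since $\sum_j|a_j|^\alpha<\infty$ for every $\alpha\geq\alpha^*$ (higher powers of eventually small $|a_j|$ remain summable) and the $|b_k|$'s are contained in the $|a_j|$'s, it suffices to pick $\gamma\in[\alpha^*,(\nu-\delta-\alpha^*)/(\nu+1-\delta)]$. This interval is nonempty precisely when $\alpha^*\leq(\nu-\delta)/(\nu+2-\delta)$, which at $\delta=0$ reduces to the strict inequality $\alpha^*<\nu/(\nu+2)$; by continuity the interval remains nonempty for all $\delta\in(0,\delta_0)$ with some $\delta_0>0$, which together with the monotonicity reduction covers every $\delta\in(0,\nu+1)$. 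The base constant $K_0$ is controlled uniformly in $i$ because $|b_1|=\max_{j\neq i,\,a_j\neq 0}|a_j|$ lies between the second-largest and the largest $|a_j|$, both of which are positive and finite by the standing assumptions.

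To transfer the bound to $Y_i$: by \eqref{eq:summability} and \citet[Corollary~4.2.12]{samorodnitsky:2016:stochastic} the series $Y_i$ converges a.s., whence $Y_i^{(n)}\to Y_i$ a.s.\ and in distribution. For any bounded continuous nonnegative $h$,
\[
\int h(x)f_i(x)\,dx=\mathbb{E}[h(Y_i)]=\lim_{n\to\infty}\int h(x)f_i^{(n)}(x)\,dx\leq K\int h(x)\min(1,|x|^{-\nu-1+\delta})\,dx.
\]
Varying $h$ yields $f_i(x)\leq K\min(1,|x|^{-\nu-1+\delta})$ for Lebesgue-a.e.\ $x$, which extends to every $x$ after modifying $f_i$ on a null set. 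The main obstacle in the above plan is the calibration of the split $\mu=|b_{n+1}|^\gamma$: balancing the smoothing penalty $(1-\mu)^{-\nu-1+\delta}$ against the tail factor $|b_{n+1}|^{\nu-\delta}\mu^{-\nu-1+\delta}$ so that the two resulting exponents in $|b_{n+1}|$ are simultaneously at or above $\alpha^*$---this is precisely what condition \eqref{eq:summability}, with its exponent $\alpha^*<\nu/(\nu+2)$, is designed to permit.
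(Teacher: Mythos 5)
Your proposal is correct and follows essentially the same route as the paper's proof: an infinite-order induction on truncated convolutions, splitting each convolution at $|y|=|b_{n+1}|^{\gamma}|x|$ and tracking the constant in the bound $K_n\min(1,|x|^{-\nu-1+\delta})$, with \eqref{eq:summability} guaranteeing summability of both $|b_k|^{\gamma}$ and $|b_k|^{\nu-\delta-\gamma(\nu+1-\delta)}$ (the paper takes the balanced choice $\gamma=(\nu-\delta)/(\nu+2-\delta)$, handles uniformity in $i$ by a small case analysis rather than your decreasing rearrangement, and passes to the limit via pointwise convergence of $g_{i,1}*P_{i,2,n}$ rather than test functions). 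The only detail worth spelling out is the existence of $f_i$ itself, which your test-function step presupposes but which follows at once since $Y_i$ is the sum of $b_1\epsilon_1'$ (absolutely continuous) and an independent remainder.
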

\begin{Rem}
Observe  that \eqref{eq:f_i bound second} is equivalent to imposing both the uniform boundedness $\sup_i \sup_x f_i(x)<\infty$   and the uniform power decay $\sup_i f_i(x)= O(|x|^{-\nu-1+\delta})$ as $|x|\rightarrow\infty$ for any $\delta\in (0,\nu+1)$.
\end{Rem}
\begin{proof}[Proof of Lemma \ref{Lem:control}]
We shall assume    that $a_j\neq 0$ for every $j\ge 0$. Otherwise, if $a_i=0$, $i\ge 0$, then $Y_i=X_i$ with $f_i$ being the marginal density of $X_0$.  The proof below with a slight modification readily covers  this case. We also assume that $|a_j|< 1.$ Otherwise, apply a proper scaling.

We first prove the existence   and the uniform boundedness of $f_i$.
Suppose first $i\ge 1$. Use $P_{i,1}$ to denote the distribution of $Z_i:=\sum_{j\ge 1,j\neq i}a_{j}\epsilon_{i-j}$. In view of Fubini's theorem, the  density   of $Y_i=a_0\epsilon_i+ Z_i$   exists and can be identified with   the convolution
\[
f_i(x): =\int_{\bb{R}} |a_0|^{-1} f_\epsilon( a_0^{-1}(x-y))P_{i,1}(dy).
\]
Hence  by \eqref{eq:f_eps assumption}, writing $\|g\|_\infty=\sup_{x\in \bb{R}}|g(x)|$ for a function $g$,  we have for $i\ge 1$,
\[
\|f_i\|_\infty\le\int |a_0|^{-1}  \|f_\epsilon\|_\infty P_{i,1}(dy)= |a_0|^{-1} \|f_\epsilon\|_\infty\le |a_0|^{-1} c_0<\infty.
\]
 The existence and boundedness for $f_0$ can be obtained similarly by replacing the role of $a_0\epsilon_i$ by $a_1 \epsilon_{i-1}$ in the argument above. Hence the uniform boundedness required in \eqref{eq:f_i bound second} follows.

Now we turn to the uniform power decay   in \eqref{eq:f_i bound second}. Recall $\delta$ in \eqref{eq:f_eps assumption} can be specified arbitrarily small.   Since  $ \min(1, |x|^{-\nu-1+\delta})$ is non-decreasing with respect to $\delta\in (0,\nu+1)$,  it suffices to prove \eqref{eq:f_i bound second} for any sufficiently (to be specified later) small  $\delta\in (0, \nu)$.  We only prove the  uniform power decay on the positive side $x>0$, and the case $x<0$ follows similarly.
 Set
\[
 h_\kappa(x)=c  \min(1,|x|^{-\kappa}),\quad \kappa:=\nu+1-\delta>1,
\]
where $c>0$ is a constant such that  (see \eqref{eq:f_eps assumption})
\begin{equation}\label{eq:f_eps bound h}
f_\epsilon(x)\le h_\kappa(x).
\end{equation}
Now for a function $g\ge 0$, we define
\[
\cl{M}^\kappa  g =\sup_{t>0} t^\kappa g(t).
\]
To prove the uniform power decay in \eqref{eq:f_i bound second} on the positive side, it suffices to show
\begin{align}\label{eq:f_i uniform power decay goal}
\sup_{i\ge 0}  \cl{M}^\kappa f_i <\infty.
\end{align}

 Suppose first $i\ge 2$.

Below we apply an infinite-order induction argument    similar to the proof of \citet[Lemma 6.6.3]{barbe2009asymptotic}.
For a fixed constant $\rho>0$ to be specified later, let \[d_j =|a_j|^{\rho}\in \left(0,1\right),\quad j\ge 1. \]
Let  $g_{i,n}$ be the density of $\sum_{0\le j\le  n,j\neq i}a_{j}\epsilon_{i-j}$, $n\ge 0$, the truncated approximation of $Y_i$. Note that  $g_{i,0}(\cdot)=|a_0|^{-1}f_\epsilon(a_0^{-1}\cdot)$ since we have supposed $i\ge 2$.

If $n\neq i$ and $n\ge 1$,  we   decompose the convolution $g_{i,n}= g_{i,{n-1}}* \left(|a_n|^{-1}f_\epsilon(a_{n}^{-1} \cdot)\right)$  as
\begin{align}
&g_{i,n}(x)  =\int_{-\infty}^{d_n x} g_{i,{n-1}}(x-y)  |a_{n}|^{-1} f_{\epsilon} (a_{n}^{-1}y)  dy +\int_{-\infty}^{(1-d_n)x} g_{i,n-1}(y)  |a_{n}|^{-1} f_{\epsilon} (a_{n}^{-1}(x-y))  dy. \label{eq:g bound}
\end{align}
Note that
\begin{align*}
\sup_{y<d_n x } g_{i,n-1}(x-y)\le &  \left(\sup_{t>(1-d_n)x} t^{-\kappa}\right) \left( \sup_{t>(1-d_n)x} t^{\kappa}g_{i,n-1}(t)\right)    \le    (1-d_n)^{-\kappa}x^{-\kappa}  \cl{M}^\kappa g_{i,n-1} .
\end{align*}
Therefore,  since   $\int_{\bb{R}} |a_{n}|^{-1} f_{\epsilon} (a_{n}^{-1}y)dy=1$, we have for all $x>0$ that
\begin{align*}
\int_{-\infty}^{d_n x} g_{i,{n-1}}(x-y)  |a_{n}|^{-1} f_{\epsilon} (a_{n}^{-1}y)  dy\le   (1-d_n)^{-\kappa} x^{-\kappa}  \cl{M}^\kappa g_{i,n-1}. \end{align*}
On the other hand, by the bound \eqref{eq:f_eps bound h} and the symmetry of $h_\kappa$, we have   $\sup_{y<(1-d_n)x} f_\epsilon(a_n^{-1}(x-y))\le \sup_{y<(1-d_n)x}  h_{\kappa}(|a_{n}|^{-1}(x-y)) \le c_0 (|a_n|^{-1} d_n x)^{-\kappa} $. Hence using $\int_{\bb{R}}g_{i,n-1}(y)dy=1$,  we have for all $x>0$ that
\begin{align*}
\int_{-\infty}^{(1-d_n)x} g_{i,n-1}(y)  |a_{n}|^{-1} f_{\epsilon} (a_{n}^{-1}(x-y))  dy\le c    |a_n|^{\kappa-1} d_n^{-\kappa} x^{-\kappa}.
\end{align*}
Applying the two   displayed bounds  above to \eqref{eq:g bound},  we conclude that
\begin{equation}\label{eq:g ind bound}
 \cl{M}^\kappa g_{i,n} \le   (1-d_n)^{-\kappa}    \cl{M}^\kappa g_{i,n-1}    +  c |a_n|^{\kappa-1} d_n^{-\kappa}.
\end{equation}

If $n=i\ge 2$, then  $g_{i,n}=g_{i,n-1}$, and  the bound above trivially follows from monotonicity.

Define
for $1\le j+1\le n$ that
\[B_{j,n}=\prod_{j +1 \le \ell \le n} (1-d_\ell)^{-\kappa}.
\]
Set also $B_{n,n}=1$.
Now by  an induction   based on the  recursive bound \eqref{eq:g ind bound}, it can be verified that for all $n\ge 1$,
\begin{align*}
 \cl{M}^\kappa g_{i,n} \le B_{0,n}  \cl{M}^\kappa g_{i,0}  +   c   \left(\sum_{j=1}^n  B_{j,n} |a_j|^{\kappa-1} d_j^{-\kappa} \right).
\end{align*}
Note that   $B_{j,n}$ increases as $j$ decreases or as $n$ increases. In view of the monotonicity,  we have
\begin{align}\label{eq:max fun bound}
 \cl{M}^\kappa g_{i,n} \le B   \cl{M}^\kappa g_{i,0} +   c  B \left(\sum_{j=1}^\infty  |a_j|^{\kappa-1} d_j^{-\kappa} \right),
\end{align}
where  $B=\lim_n B_{0,n}$. Now set $ \rho=\kappa(1-\rho)-1$ which implies \[\rho=\frac{\kappa-1}{\kappa+1}=\frac{\nu-\delta}{\nu+2-\delta}.\]  By assumption  \eqref{eq:summability}  with  $\delta>0$ chosen sufficiently small so that $\rho\ge \frac{\nu}{\nu+2}-\varepsilon$, we have
\[
\sum_{j=0}^\infty  d_j =\sum_{j=0}^\infty |a_j|^{\rho}<\infty, \quad \text{and }\ \sum_{j=1}^\infty  |a_j|^{\kappa-1} d_j^{-\kappa}=\sum_{j=1}^\infty  |a_j|^{\kappa(1-\rho)-1}<\infty.
\]
  Note that the summability $\sum_{j=0}^\infty  d_j <\infty$ implies  $ B\in (0,\infty)$. Recall $g_{i,0}(\cdot)=|a_0|^{-1}f_\epsilon(a_0^{-1}\cdot)$, and hence by \eqref{eq:f_eps bound h} and \eqref{eq:max fun bound} we have
\begin{equation}\label{eq:g uniform bound}
\sup_{n\ge 1,i\ge 2}  \cl{M}^\kappa g_{i,n}  <\infty.
\end{equation}
Let $P_{i,2,n}$ denote the distribution of  of $Z_{i,2,n}:=\sum_{2\le j\le n,\, j\neq i}a_{j}\epsilon_{i-j}$, $n\ge 2$. The a.s.\ convergence of $Z_{i,2,n}$ to $Z_{i,2}:=\sum_{j\ge 2,\, j\neq i}a_{j}\epsilon_{i-j}$  implies the weak convergence of $P_{i,2,n}\Rightarrow P_{i,2}$ as $n\rightarrow\infty$, where $P_{i,2}$ is the distribution of $Z_{i,2}$.  On the other hand, the function $g_{i,1}$  (recall $i\ge 2$), as a convolution between two bounded integrable functions $|a_0|^{-1}f_\epsilon(a_0^{-1}\cdot)$ and $|a_1|^{-1}f_\epsilon(a_1^{-1}\cdot)$,  is   bounded and  continuous (e.g., \citet[Corollary 3.9.6]{bogachev2007measure}). Hence  for any $x\in \bb{R}$ and $n\ge 1$, we have for all $x>0$ that
\begin{equation}\label{eq:limig g_i,n}
g_{i,n}(x)=\int_{\bb{R}} g_{i,1}(x-y) P_{i,2,n}(dy)\rightarrow \int_{\bb{R}} g_{i,1}(x-y) P_{i,2}(dy)=f_i(x)
\end{equation}
as $n\rightarrow\infty$. So combining \eqref{eq:limig g_i,n} with \eqref{eq:g uniform bound}, we conclude that
\begin{align}\label{eq:f_i bound goal}
\sup_{i\ge 2}  \cl{M}^\kappa f_i <\infty.
\end{align}

By a similar argument which uses some other indices to replace the roles of $i=0,1$ above, we can also show that  $ \cl{M}^\kappa f_i <\infty$ for $i=0,1$. This combined with \eqref{eq:f_i bound goal}  concludes    \eqref{eq:f_i uniform power decay goal}.
\end{proof}

\begin{Rem}
It is possible to further improve the tail decay in the bound \eqref{eq:f_i bound second}. Under  additional assumptions including certain  smooth (regular) variation (cf., \cite[Section 1.8]{bingham:goldie:teugels:1989:regular}) conditions    on the distribution of $\epsilon_0$, the  remarkable work of
\cite{barbe2009asymptotic} developed uniform asymptotic expansions for the marginal distribution of regularly varying linear series.  In paticular, their Theorem 2.5.1 implies a sharp uniform bound: there exists $y_0>0$, such that for all $y>y_0$,
\begin{equation*}
\sup_{  i\ge 0}f_i(y)   \le
  c   |y|^{-\nu-1}  \ell(y),\quad i\ge 0,
\end{equation*}
with the slowly varying $\ell(y)$ as in \eqref{eq:abs RV}.       On the other hand, the slightly weaker tail bound  in \eqref{eq:f_i bound second} is verified  under less stringent assumptions compared to \cite{barbe2009asymptotic}, which suffices for our purposes.
\end{Rem}

\begin{Rem}\label{Rem:uniform bound stable distr}
The   summability assumption \eqref{eq:summability} is imposed for establishing the uniform  power decay estimate  in \eqref{eq:f_i bound second}. On the other hand,  the restriction  \eqref{eq:summability} is likely  only an artifact of the current proof. Indeed,
consider the case where the innovations $(\epsilon_i)$ are standard  symmetric $\nu$-stable (S$\nu$S), $\nu\in (0,2)$, specified by the characteristic function   $\E[e^{i\theta\epsilon_0}]=e^{-|\theta|^{ \nu}}$. See, e.g., \citet{nolan2020univariate} for more details.  This  special case satisfies the balanced regularly variation assumptions \eqref{eq:abs RV} and  \eqref{eq:tail balance}.   In this case, for the linear process \eqref{eq:linear proc} to be well-defined, it suffices to assume $\sum_{i=0}^\infty |a_i|^{ \nu\wedge 1}<\infty$ (e.g., \citet{kokoszka1995fractional}). In addition, it follows from the sum-stability property of S$\nu$S distributions that $Y_i\EqD\left(\sum_{j\ge 0,j\neq i}|a_i|^{\nu}\right)^{1/\nu}\epsilon_0$, where $\inf_{i\ge 0}\left(\sum_{j\ge 0,j\neq i}|a_i|^{\nu}\right)^{1/\nu}>0$. This implies   a uniform bound      \[f_i(y)\le c\min(1,|y|^{-\nu-1})\] for all $i\ge 0$ (cf., e.g., Sections 1.4 and 1.5 of \citet{nolan2020univariate}). The  discussion above is generalizable to   non-symmetric stable  distributions which for simplicity is omitted.
\end{Rem}

\begin{Cor}\label{Cor:key}
Fix any $\delta\in (0,\nu+1)$.
Under the same   assumptions as Lemma \ref{Lem:control},
  for all $-  |z|/2\le  v\le u\le  |z|/2$, $z\in \mathbb{R}$ and $i\ge 0$, we have
\[
 \pr( Y_i\in [z-u,z-v])  \le    c (u-v)  \min\left(1, |z|^{-\nu-1+\delta}\right).
\]
for some  constants $c>0$.
\end{Cor}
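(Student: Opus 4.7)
The plan is to express the probability as an integral of the density $f_i$ and then use the uniform bound from Lemma \ref{Lem:control} after observing that the entire integration interval $[z-u,z-v]$ stays away from the origin when $z\neq 0$.

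First I would dispose of the degenerate case $z=0$: since $-|z|/2\le v\le u\le |z|/2$ forces $u=v=0$, the set $[z-u,z-v]=\{0\}$ has zero probability under $f_i$, and the right-hand side equals $0$, so the bound holds trivially. For $z\neq 0$, I would observe the simple geometric fact that for every $x\in[z-u,z-v]$,
\[
|x|\ge |z|/2.
\]
Indeed, if $z>0$ then $x\ge z-u\ge z-|z|/2 = z/2>0$, and if $z<0$ then $x\le z-v\le z+|z|/2 = z/2<0$; in both situations $|x|\ge |z|/2$.

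Next I would write $\pr(Y_i\in[z-u,z-v]) = \int_{z-u}^{z-v} f_i(x)\,dx$ (using existence of $f_i$ from Lemma \ref{Lem:control}) and invoke the uniform bound $f_i(x)\le c\min(1,|x|^{-\nu-1+\delta})$. Combined with $|x|\ge |z|/2$ on the integration range, this yields
\[
f_i(x)\le c\min\bigl(1,(|z|/2)^{-\nu-1+\delta}\bigr)\le c\,2^{\nu+1-\delta}\min\bigl(1,|z|^{-\nu-1+\delta}\bigr),
\]
where the second inequality absorbs the constant factor $2^{\nu+1-\delta}\ge 1$ into the bound (checking the two regimes $|z|\ge 2$ and $|z|<2$ separately confirms this). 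Integrating over an interval of length $u-v$ then gives $\pr(Y_i\in[z-u,z-v])\le c'(u-v)\min(1,|z|^{-\nu-1+\delta})$, as required.

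I do not foresee any genuine obstacle: the entire content is packed into Lemma \ref{Lem:control}, and the corollary is essentially a one-line integration estimate once one notes that on the displayed interval $|x|$ is comparable to $|z|$. The only place that requires a moment's attention is replacing $|z|/2$ by $|z|$ inside $\min(1,\cdot)$, which is handled by the elementary constant adjustment indicated above.
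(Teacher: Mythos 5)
Your proof is correct and follows essentially the same route as the paper: both write the probability as $\int_{z-u}^{z-v} f_i$, invoke the uniform bound of Lemma \ref{Lem:control}, and exploit the fact that $|x|\ge|z|/2$ on the integration interval to replace $|x|$ by $|z|$ up to a constant. The only cosmetic difference is that the paper applies the constant bound and the power-law bound separately and takes the minimum at the end, whereas you carry the $\min$ through the whole estimate; the content is identical.
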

\begin{proof}

Write
\begin{align*}
 \pr( Y_i\in [z-u,z-v]) = \int_{z-u}^{z-v} f_i(y)dy.
\end{align*}
By Lemma \ref{Lem:control}, we have the constant bound $f_i(y)\le c$ and the power-law bound $f_i(y)\le c |y|^{-\nu-1+\delta}$ for all $i\ge 0$.
The constant bound     yields
\[
 \pr( Y_i\in [z-u,z-v]) \le c (u-v).
\]
The power-law bound  combined with the restriction on $u$ and $v$    yields
\[
 \pr( Y_i\in [z-u,z-v]) \le c \int_{z-u}^{z-v}   |y|^{-\nu -1+\delta}dy\le c \int_{z-u}^{z-v}   |z/2|^{-\nu -1+\delta}dy\le c |z|^{-\nu-1+\delta}(u-v).
\]
Combining the bounds  concludes the proof.
\end{proof}

\subsection{Verification of TAS Condition}

We shall  provide an explicit bound of the TAS measure \eqref{eq:TAS measure} for the linear process  \eqref{eq:linear proc}  in Theorem \ref{Thm:linear} below. The bound enables an immediate verification of $\mathrm{TAS}_q$ in Corollary \ref{Cor:TAS Linear} below.

 Below is the main result.
\begin{Thm}\label{Thm:linear}Suppose $(X_i)$ is a  linear process as in \eqref{eq:linear proc} with i.i.d.\   innovations $(\epsilon_i)$ satisfying \eqref{eq:abs RV}, \eqref{eq:tail balance} and \eqref{eq:f_eps assumption}.  Assume the coefficients $a_i\neq 0$ for infinitely many $j\ge0$, the summability condition  \eqref{eq:summability} holds, and   the right-hand side of \eqref{eq:tail linear} is nonzero.
Fix an arbitrary $\eta\in (0,\nu)$ when $\nu\le 1$.   There exist   constants $c>0$ and $y_0>0$,   such that for all    $y>y_0$ and $i\ge 0$,   the TAS measure $\theta_y(i)$  in \eqref{eq:TAS measure} satisfies
\[
\theta_y(i)   \le  \begin{cases}
c    |a_i|,     & \nu>1,\\
 c     \left| a_i  \right|^{ \eta},    & \nu\le 1,
\end{cases}
\]
\end{Thm}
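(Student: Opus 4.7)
The plan is to decompose
\[
\pr(X_i^* \le z,\, X_i > z) \;=\; \pr\bigl(z - a_i\epsilon_0 < Y_i \le z - a_i\epsilon_0^*\bigr)
\]
according to whether both $a_i\epsilon_0$ and $a_i\epsilon_0^*$ lie in the moderate range $[-z/2, z/2]$ (call this event $A$), and then divide by $\pr(X_i > z)$, which is at least $c z^{-\nu - \delta'}$ for $z \ge y_0$ by \eqref{eq:tail linear} (whose right-hand side is assumed nonzero) combined with Potter's lower bound \eqref{eq:Potter lower}. Without loss of generality I take $a_i > 0$ (reversing the sign of $\epsilon_0$ otherwise) and $|a_i| < 1$, since \eqref{eq:summability} forces $|a_j| \to 0$ and only finitely many indices need the trivial bound $\theta_y(i) \le 1$, which is absorbed into the constant.

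On $A$, together with the necessary ordering $\epsilon_0 > \epsilon_0^*$ (outside which the interval is empty), Corollary~\ref{Cor:key} applies conditionally on $(\epsilon_0,\epsilon_0^*)$ with $u = a_i\epsilon_0$ and $v = a_i\epsilon_0^*$, yielding
\[
\pr\bigl(Y_i \in [z - a_i\epsilon_0, z - a_i\epsilon_0^*] \,\big|\, \epsilon_0, \epsilon_0^*\bigr) \;\le\; c\, a_i (\epsilon_0 - \epsilon_0^*)_+\, z^{-\nu - 1 + \delta}.
\]
Taking expectations leaves the truncated first moment $\E[|\epsilon_0|\,\indicator{|\epsilon_0| \le z/(2 a_i)}]$. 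When $\nu > 1$ this is uniformly bounded by $\E|\epsilon_0| < \infty$, so the $A$-contribution to $\theta_y(i)$ is of order $c a_i z^{-1+\delta+\delta'}$, which is $\le c a_i$ uniformly for $z \ge y_0$. When $\nu \le 1$, Lemma~\ref{Lem:moment RV} together with Karamata's theorem yield $\E[|\epsilon_0|\,\indicator{|\epsilon_0| \le T}] \le c\, T^{1 - \nu + \delta}$ at $T = z/(2 a_i)$, so the $A$-contribution becomes $c\, a_i^{\nu - \delta}\, z^{-\nu + 2\delta + \delta'}$, which with $\delta, \delta'$ chosen small is $\le c |a_i|^{\nu - \delta}$ for $z \ge y_0$.

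Off $A$ there are three sub-cases. If $|a_i\epsilon_0| > z/2$, monotonicity bounds the probability by $\pr(|\epsilon_0| > z/(2 a_i))$; by the Potter ratio \eqref{eq:Potter ratio} applied with $x = 2 a_i$, this is at most $c a_i^{\nu - \delta}\,\pr(|\epsilon_0| > z)$, and dividing by $\pr(X_i > z)$ gives $c a_i^{\nu - \delta}$. If $|a_i\epsilon_0| \le z/2$ and $a_i\epsilon_0^* > z/2$, then $X_i > z$ forces $Y_i > z/2$ while $X_i^* \le z$ forces $Y_i \le z/2$, a contradiction, so this sub-case contributes zero. If $|a_i\epsilon_0| \le z/2$ and $a_i\epsilon_0^* < -z/2$, independence of $\epsilon_0^*$ from $X_i$ bounds the probability by $\pr(X_i > z)\,\pr(|\epsilon_0| > z/(2 a_i))$, whose ratio to $\pr(X_i > z)$ vanishes as $z \to \infty$. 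Collecting contributions, $\theta_y(i) \le c a_i + c a_i^{\nu - \delta}$: for $\nu > 1$ one has $a_i^{\nu - \delta} \le a_i$ (since $|a_i|<1$ and $\nu - \delta > 1$ for $\delta$ small), giving $\theta_y(i) \le c a_i$; for $\nu \le 1$ the dominant term is $c a_i^{\nu - \delta}$, and the choice $\delta = \nu - \eta > 0$ delivers the claimed $c|a_i|^{\eta}$. The main obstacle is the regime $\nu \le 1$: because $\E|\epsilon_0| = \infty$, the truncation in the expectation against $(\epsilon_0 - \epsilon_0^*)_+$ introduces a factor $T^{1-\nu+\delta}$ that must be balanced against the density decay $z^{-\nu-1+\delta}$ supplied by Lemma~\ref{Lem:control}; this balance, together with the Potter bounds on the denominator, introduces an unavoidable loss of $\delta$ in the exponent of $a_i$ and explains why one cannot reach $\eta = \nu$ in the conclusion.
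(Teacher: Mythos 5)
Your proposal is correct and follows essentially the same route as the paper's proof: the same split of the coupling event into a moderate range $\{-z/2\le a_i\epsilon_0^*,\,a_i\epsilon_0\le z/2\}$ handled via Corollary~\ref{Cor:key} and a large-innovation remainder handled via Potter's ratio bound, with the same use of Lemma~\ref{Lem:moment RV} for the truncated first moment when $\nu\le 1$ and the same bookkeeping of small exponent losses (your finer three-way analysis of the off-$A$ event, including the empty sub-case and the independence argument for $a_i\epsilon_0^*<-z/2$, is a slightly more careful version of the paper's single term $B_i(z)$). The only cosmetic gap is that in your third sub-case you should record the explicit uniform bound $c|a_i|^{\nu-\delta}$ from the Potter ratio rather than merely noting the ratio vanishes, but this follows from exactly the estimate you already used in the first sub-case.
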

In fact, Theorem \ref{Thm:linear} follows   from Theorem \ref{Thm:Vol} below in the special case where each $S_i=1$ in \eqref{eq:R_i}.  For convenience, we include a separate and more transparent proof for this special case below.
\begin{proof}
Assume without loss of generality that every $a_j\neq 0$ and let $z>0$. Below $y_0>0$ is a constant which does not depend on $i\ge 0$, whose value may be increased if necessary each time  when mentioned.
Write
\begin{align*}
& \pr( X_i^* \le z  \ |\    X_i > z) = \frac{\pr\left(z- a_i  \epsilon_0 <Y_i\le z-a_i \epsilon_0^*\right)}{ \pr( X_i >z) }\\
\le & \frac{\pr\left( z- a_i  \epsilon_0 <Y_i\le z-a_i \epsilon_0^* ,\  - z /2 \le a_i \epsilon_0^*< a_i \epsilon_0\le  z/2  \right)}{ \pr( X_0 >z) }+\frac{  P\left( |a_i\epsilon_0|> z/2   \right)}{ \pr( X_0 >z) }\\=&: A_i(z) +B_i(z).
\end{align*}

\noindent $\bullet $ Suppose $\nu>1$.

Recall that $\pr(X_0>z)$ is regularly varying with index $-\nu$ in view of \eqref{eq:tail linear}. Hence by Potter's bound \eqref{eq:Potter lower}, for any chosen  $\delta\in (0,1)$, there exists $y_0>0$ and constant $c>0$ such that
\[
\pr(X_0>z)\ge c z^{-\nu-1+\delta}
\]
for all $z>y_0$. Then by independence and Corollary \ref{Cor:key} with the same $\delta>0$, we have   for all $z>y_0$,
\begin{align*}
 A_i(z) = & \int_{- z / 2 \le a_i v< a_i u \le z /2}  \frac{\pr\left(z- a_i  u \le Y_i\le z- a_i  v \right)}{ \pr( X_0 >z)}  f_\epsilon(u)f_\epsilon(v)  du dv  \\
\le &   c |a_i|    \int_{ |a_iu|, |a_iv| \le z/2  }  |u-v|  f_\epsilon(u)f_\epsilon(v)  du dv.
\end{align*}
We can bound the integral above as
\begin{equation}\label{eq:int exp}
\int_{|a_iu|, |a_iv| \le z/2}  |u-v|  f_\epsilon(u)f_\epsilon(v)  du dv \le   \E  |\epsilon_0-\epsilon^*_0| \le 2\E|\epsilon_0|.
\end{equation}
Hence  if $\nu >1$ under which $ \E|\epsilon_0|<\infty$, we have
\[
 A_i(z)\le c   | a_i  |
\]
for $z>y_0$. On the other hand, it follows from \eqref{eq:tail linear}, the restriction $\nu>1$ and   Potter's bound \eqref{eq:Potter ratio} that  for all $z>y_0$,
\[
B_i(z)\le    c |a_i|.
\]

\noindent $\bullet$ Suppose $\nu\in (0,1]$.

By the Potter's bound \eqref{eq:Potter lower}, for any $\delta$ chosen such that $0<\delta<\eta<\nu$, we have
\[
\pr(X_0>z)\ge c z^{-\nu -\eta+\delta}
\]
for for all $z>y_0$.
Then similarly as above, applying Corollary \ref{Cor:key} with the same $\delta$, we have for $z>y_0$ that
\begin{align*}
 A_i(z)
\le     c |a_i|  z^{\eta-1}   \int_{ |a_iu|, |a_iv| \le z/2  }  |u-v|  f_\epsilon(u)f_\epsilon(v)  du dv.
\end{align*}
 For all $z>y_0$,   the integral above is bounded by
\begin{equation}\label{eq:trun eps moment}
2\E [|\epsilon_0|; |\epsilon_0|\le z/(2|a_i|) ]\le c   \left(\frac{z }{|a_i|}\right)^{1-\eta}.
\end{equation}
The last   inequality follows from  $\sup_{j}|a_j|<\infty$,   Lemma \ref{Lem:moment RV} and Potter's bound  \eqref{eq:Potter upper}.  Then for $z> y_0$,
\[
A_i(z)\le   c |a_i|^{\eta}.
\]
  On the other hand, it follows from \eqref{eq:tail linear} and  Potter's bound \eqref{eq:Potter ratio} that  for all $z>y_0$,
\[
B_i(z)\le c\frac{  \pr\left( |a_i\epsilon_0|> z/2   \right)}{ \pr( |\epsilon_0|>z) } \le  c |a_i|^\eta.
\]
The conclusion follows.

\end{proof}

Under the conditions of the theorem above,
we have for any $\eta\in(0,\nu)$ arbitrarily close to $\nu$,
\[
\Theta_{y,q}=\sum_{i=0}^\infty\theta_y(i)^{1/q} \le \begin{cases}
c\sum_{i=0 }^\infty |a_i|^{1/q}      & \nu>1,\\
 c  \sum_{i=0}^\infty   |a_i|^{\eta/q}   & \nu\le 1.
\end{cases}
\]Combining this with \eqref{eq:summability}, we arrive at the following sufficient condition for the $\mathrm{TAS}_q$ condition.
\begin{Cor}\label{Cor:TAS Linear}
Suppose the assumptions of Theorem \ref{Thm:linear} holds.
 Then $\mathrm{TAS}_q$ condition \eqref{eq:TAS} holds if for some $\varepsilon>0$,
\[\sum_{i\ge 0} |a_i|^{\vartheta(\nu,q,\varepsilon)}<\infty;
\]
where
\[
\vartheta(\nu,q,\varepsilon)=\begin{cases} \frac{1}{q}, & \text{ when }
\nu>1 \text{ and } \frac{\nu}{\nu +2}>\frac{1}{q};\\
\frac{\nu}{\nu +2}-\varepsilon, &   \text{ when } \nu>1 \text{ and } \frac{\nu}{\nu +2}\le \frac{1}{q};\\
 \frac{\nu}{q} -\varepsilon, & \text{ when } \nu\in (0,1] \text{ and } \nu<q-2;\\
  \frac{\nu}{\nu+2} -\varepsilon, & \text{ when } \nu\in (0,1] \text{ and } \nu\ge q-2.
\end{cases}
\]

\end{Cor}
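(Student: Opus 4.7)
The argument is a direct consequence of Theorem~\ref{Thm:linear} together with an elementary monotonicity observation for power series: if $(b_i)$ is a non-negative null sequence, then $b_i \le 1$ eventually, so $\alpha \mapsto b_i^{\alpha}$ is eventually decreasing, and hence summability of $\sum_i b_i^{\alpha_1}$ transfers to every larger exponent $\alpha_2 \ge \alpha_1$. Since any $(a_i)$ satisfying \eqref{eq:summability} is in particular a null sequence, this monotonicity will apply throughout.

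The plan is to first apply Theorem~\ref{Thm:linear} to bound each term $\theta_y(i)^{1/q}$ uniformly for $y > y_0$ by $c\,|a_i|^{\gamma/q}$, where $\gamma = 1$ when $\nu > 1$ and $\gamma = \eta$ for an arbitrary $\eta \in (0,\nu)$ when $\nu \le 1$. Summing over $i$ yields $\Theta_{y,q} \le c\sum_{i \ge 0} |a_i|^{\gamma/q}$, a bound independent of $y$. Hence \eqref{eq:TAS} reduces to finiteness of this series, for a suitable choice of $\eta$ in the second regime.

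The remaining work is a case-by-case verification that the hypothesis $\sum_i |a_i|^{\vartheta(\nu,q,\varepsilon)} < \infty$ implies $\sum_i |a_i|^{\gamma/q} < \infty$ via the monotonicity just noted, i.e.\ that $\gamma/q \ge \vartheta(\nu,q,\varepsilon)$ after a suitable choice of $\eta$. When $\nu > 1$ and $\nu/(\nu+2) > 1/q$, one has $\gamma/q = 1/q = \vartheta$, so the implication is immediate. When $\nu > 1$ and $\nu/(\nu+2) \le 1/q$, the relation $\gamma/q = 1/q \ge \nu/(\nu+2) > \nu/(\nu+2) - \varepsilon = \vartheta$ suffices. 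For $\nu \le 1$ and $\nu < q-2$, one selects $\eta \in (\nu - q\varepsilon,\,\nu)$, a non-empty interval because $\varepsilon > 0$, giving $\gamma/q = \eta/q > \nu/q - \varepsilon = \vartheta$. Finally, for $\nu \le 1$ and $\nu \ge q-2$ (equivalently $q \le \nu + 2$), the inequality $q\nu/(\nu+2) \le \nu$ ensures that $(q\nu/(\nu+2) - q\varepsilon,\,\nu)$ is a non-empty sub-interval of $(0,\nu)$; picking $\eta$ there yields $\gamma/q = \eta/q > \nu/(\nu+2) - \varepsilon = \vartheta$.

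With all four cases handled, $\Theta_{y,q}$ is bounded uniformly in $y > y_0$, so $\lim_{y\uparrow \infty}\Theta_{y,q} < \infty$, which is precisely the TAS$_q$ condition. There is no substantive obstacle: the argument is essentially bookkeeping built on top of Theorem~\ref{Thm:linear}, and the only delicate point is the choice of $\eta \in (0,\nu)$ with $\eta/q$ strictly above the threshold $\vartheta$ in the two regimes with $\nu \le 1$, which is enabled precisely by the regime-defining inequalities ($\nu < q$ and $q \le \nu + 2$, respectively) combined with the freedom to take $\varepsilon > 0$.
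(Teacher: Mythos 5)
Your proof is correct and takes essentially the same route as the paper: apply Theorem~\ref{Thm:linear} to bound $\theta_y(i)^{1/q}$ by $c|a_i|^{\gamma/q}$ uniformly in $y>y_0$, sum over $i$, and compare exponents using that $|a_i|\to 0$ (guaranteed by \eqref{eq:summability}). The paper's own argument is precisely the displayed bound on $\Theta_{y,q}$ preceding the corollary followed by the same exponent bookkeeping, which you have merely spelled out case by case, including the correct choice of $\eta\in(0,\nu)$ in the two regimes with $\nu\le 1$.
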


\begin{Rem}\label{Rem:conjecture}
As mentioned in Remark \ref{Rem:uniform bound stable distr}, the restriction \eqref{eq:summability} can be relaxed when the innovations $(\epsilon_i)$ are S$\nu$S random variables, $\nu\in(0,2)$. In this case,  it follows a similar line of argument as the proof of  Theorem \ref{Thm:linear} and the   properties of stable distributions   that the $\mathrm{TAS}_q$ condition holds  if $\sum_{i\ge 0} |a_i|^{\vartheta(\nu,q,\varepsilon)}<\infty$ but with  $\vartheta(\nu,q,\varepsilon)$ in Corollary \ref{Cor:TAS Linear}  above    replaced   by
\[
\vartheta(\nu,q,\varepsilon)=\begin{cases} \frac{1}{q}, & \text{ when }
\nu>1;\\
1/q-\varepsilon, &   \text{ when } \nu=1;\\
 \frac{\nu}{q}, & \text{ when } \nu<1;\\
\end{cases}
\] We conjecture that for a large class of regularly varying linear processes,  the uniform estimate in Lemma \ref{Lem:control} holds  under less stringent conditions than  \eqref{eq:summability}, and  that $\mathrm{TAS}_q$ holds under conditions close to the one mentioned above for the S$\nu$S case.
\end{Rem}

\section{A Stochastic Volatility Extension}\label{sec:stochasticvolatilitymodel}
\subsection{Model Setup}\label{sec:stoch vol}
Consider the following model of  stochastic volatility  type. Let $X_i=\sum_{j=0}^\infty a_{j} \epsilon_{i-j}$ be the linear process with innovations $\epsilon_j$ balanced regularly varying  with index $-\nu<0$, which satisfies all the assumptions in     Section \ref{sec:basic setup}. Here we allow the right-hand side of \eqref{eq:tail linear}  to be zero (i.e., left tail of $X_i$ dominates instead). In particular, we have
\begin{equation}\label{eq:tail linear two sides}
\lim_{x\rightarrow\infty}\frac{\pr(X_0>x)}{\pr(|\epsilon_0|>x)} =  A_1,\quad \lim_{x\rightarrow\infty}\frac{\pr(X_0<-x)}{\pr(|\epsilon_0|>x)} = A_2
\end{equation}
where $A_1=\sum_{j=0}^\infty \left( p (a_j)_+^{\nu} + (1-p) (a_j)_-^{\nu} \right)$ and $A_2=\sum_{j=0}^\infty\left( p (a_j)_-^{\nu} +  (1-p) (a_j)_+^{\nu} \right)$,
where either $A_1>0$ or $A_2>0$ since $A_1+A_2=\sum_{j\ge 0} |a_j|^{\nu}>0$.

 Let $(S_i)$ be i.i.d.\ random variables independent of $(\epsilon_j)$.   Then consider the model of stochastic volatility type:
\begin{equation}\label{eq:R_i}
R_i = S_i X_i.
\end{equation}
Note that this follows the causal process form \eqref{eqn:gen proc}   with $ e_i:=(S_i,\epsilon_i)$.

We introduce for notational simplicity   $(S,X)\EqD (S_0,X_0)$ and set $R= SX$. Write also $(\epsilon,S)\EqD(\epsilon_0,S_0)$. Below $Z_+$ and $Z_-$ denote the positive and negative parts of random variable $Z$ respectively. We make the following assumption.
\begin{Ass}\label{Ass:tail compar}
Assume either of the following cases holds.
\begin{enumerate}[(I)]
\item ($S$ has lighter tail than $X$)  For some $\beta>\nu$ that
\begin{equation}\label{eq:breiman moment}
E|S|^{\beta}<\infty,
\end{equation}
and
\begin{equation}\label{eq:non degen}
\pr(A_1 S_+    +  A_2 S_-   >0)>0;
\end{equation}

\item  ($S$ has heavier  tail than $X$)   $S$ is balanced regularly varying  with index $-\beta$ and tail balance parameter $q:=
\lim_{x\rightarrow\infty} \pr(S>x)/\pr(|S|>x) \in [0,1]$, where $\beta\in (0,\nu)$,  and
\begin{equation}\label{eq:non degen II}
\pr(q X_+ +(1-q) X_->0)>0;
\end{equation}

\item  ($S$ has comparable tail as $X$)
 $S$ is balanced regularly varying  with index $-\beta$ and tail balance parameter $q$ as above,
 $\beta=\nu$, and
 \begin{equation}\label{eq:non degen III}
\text{if $A_2=0$, $q>0$; \ if $A_1=0$, $q<1$.}
 \end{equation}

 \end{enumerate}
\end{Ass}

Throughout the paper, we write $a_i \sim b_i$ if $a_i/b_i \to 1$ as $i \to \infty$.

\begin{Rem}
Under Case (I),
by Breiman's Lemma (e.g., \cite[Lemma 1.4.3]{kulik2020heavy}) and \eqref{eq:tail linear} (i.e., \cite[Corollary 4.2.12]{samorodnitsky:2016:stochastic}),  one has  as $z\rightarrow\infty$ that
\begin{equation}\label{eq:R tail I}
\pr(R>z)=\pr(S_+ X_+>z)+\pr(S_- X_->z)\sim \left( A_1 \E S_+^{\nu}  + A_2 \E S_-^{\nu} \right) \pr(|\epsilon_0|>z) ,
\end{equation}
where   $A_1 \E S_+^{\nu}  + A_2 \E S_-^{\nu}>0$ under the assumption \eqref{eq:non degen}  and hence $\pr(R>z)$ is regularly varying with index $-\nu$ as $z\rightarrow\infty$.

Under (II)  when $\beta<\nu$, since $E|X|^{\beta+\gamma}<\infty$ for $\gamma\in (0,\nu-\beta)$ (Lemma \ref{Lem:moment RV}),  by Breiman's Lemma similarly as above,  we have as  $z\rightarrow\infty$ that
\begin{equation}\label{eq:R tail II}
\pr(R>z) =\pr(S_+ X_+>z)+\pr(S_- X_->z)   \sim  \left(q \E X_+^{\beta}+(1-q)\E X_-^{\beta}\right) \pr(|S|>z),
\end{equation}
where $q \E X_+^{\beta}+(1-q)\E X_-^{\beta}>0$ under the assumption \eqref{eq:non degen II} and   hence $\pr(R>z)$ is regularly varying with index $-\beta$ as $z\rightarrow\infty$.

 Under (III) when $\beta=\nu$,  by \cite[COROLLARY of Theorem 3]{embrechts1980closure},   the tail $\pr(|R| >z)$ is a regularly varying with index $-\nu$ as $z\rightarrow\infty$.    However, the same result cannot conclude regular variation of $\pr(R >z)=\pr(S_+ X_+ >z)+\pr(S_-X_->z)$ as $z\rightarrow\infty$ in all the possible cases. For example, when $A_1,A_2>0,q=0$, while $\pr(S_+ >z)=o(\pr(S_- >z))$,  it could happen that $\pr(S_+>z)$ is neither regularly varying  nor of smaller order than $\pr(X_+>z)$ as $z\rightarrow\infty$. In this case, \cite[COROLLARY of Theorem 3]{embrechts1980closure} is not applicable to conclude the regular variation of $\pr(S_+X_+>z)$, although  the regular variation of $\pr(S_-X_->z)$ follows.

  Note that
the condition \eqref{eq:non degen III} excludes the special cases $A_2=q=0$ or $A_1=1-q=0$.  These two cases introduce  some technical difficulty to the current proof. On the other hand, these two special cases possibly allow $R_-=S_+X_-+S_-X_+$ to have  a heavier tail than $R_+=S_+X_+ +S_-X_-$, which is less relevant since the focus is  on the right tail of $R$.

\end{Rem}


\subsection{Verification of TAS Condition}

Let $R_i^*$ be as $R_i$ except that $e_0$ is replaced by an  identically distributed copy $e_0^*=(S_i^*,\epsilon_i^*)$ independent of $(e_i)$.
Define as before
\begin{equation}\label{eq:theta R}
\theta_y(i):=\sup_{z\ge y} \pr( R_i^* \le z  \ |\    R_i > z)
\end{equation}
and then
\[
\Theta_{y,q} :=\sum_{i=0}^\infty\theta_y(i)^{1/q}
\]
It turns out that the same conclusion as Theorem \ref{Thm:linear} holds for the stochastic volatility extension.

\begin{Thm}\label{Thm:Vol}
Suppose $(R_i)$ is of the form \eqref{eq:R_i} with  $(X_i)$ specified as a linear process in \eqref{eq:linear proc} with  the coefficient $a_i\neq 0$ for infinitely many $i\ge 0$,  satisfying \eqref{eq:abs RV}, \eqref{eq:tail balance}, \eqref{eq:summability} and \eqref{eq:f_eps assumption}. Suppose also that Assumption \ref{Ass:tail compar} holds.
Fix an arbitrary $\eta\in (0,\nu)$ when $\nu\le 1$. There exist   constants $c>0$ and $y_0>0$, such that for all  large $y\ge y_0$ and $i\ge 0$, the TAS measure $\theta_y(i)$  in \eqref{eq:theta R} satisfies
\[
\theta_y(i)   \le  \begin{cases}
c    |a_i|      & \nu>1,\\
 c     \left| a_i  \right|^{ \eta}    &  \nu\le 1.
\end{cases}
\]
\end{Thm}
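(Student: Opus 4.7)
The plan is to follow the proof of Theorem~\ref{Thm:linear}, with the volatility factor $S_i$ absorbed by an extra conditioning step. For $i\ge 1$, $S_i$ is independent of $e_0=(S_0,\epsilon_0)$ and of $(\epsilon_j)_{j\le 0}$, so the time-$0$ coupling affects $R_i$ only through the $\epsilon_0$ coordinate, yielding $R_i=S_iX_i$ and $R_i^*=S_iX_i^*$ with $X_i,X_i^*$ as in Section~\ref{sec:linearprocesses}. The boundary case $i=0$ is handled, as in Theorem~\ref{Thm:linear}, by assuming without loss of generality that $a_0\neq 0$ and using the trivial bound $\theta_y(0)\le 1$, absorbed into the constant.

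Next I would make the same two-part decomposition as in the proof of Theorem~\ref{Thm:linear},
\begin{equation*}
\pr(R_i^*\le z,R_i>z)\le \mathcal{A}_i(z)+2\mathcal{B}_i(z),
\end{equation*}
where $\mathcal{A}_i(z)=\pr(R_i^*\le z,R_i>z,\,|S_ia_i\epsilon_0|\vee|S_ia_i\epsilon_0^*|\le z/2)$ and $\mathcal{B}_i(z)=\pr(|S_ia_i\epsilon_0|>z/2)$. For $\mathcal{A}_i(z)$, I would condition on $S_i=s$, split by $\mathrm{sgn}(s)$, and observe that on $\{S_i=s\}$ with $s>0$ the conditional event coincides with the main event in the proof of Theorem~\ref{Thm:linear} but with $z$ replaced by $z/s$ (the case $s<0$ is symmetric). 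Corollary~\ref{Cor:key} is uniform in its scale parameter, so replaying that argument gives, with $\alpha_\nu=1$ when $\nu>1$ and $\alpha_\nu=\eta$ when $\nu\le 1$,
\begin{equation*}
\pr\bigl(\mathcal{A}_i(z)\text{-event}\mid S_i=s\bigr)\le c|a_i|^{\alpha_\nu}\min\bigl(1,(z/|s|)^{-\nu-1+\delta}\bigr)\,T_\nu\bigl(z/|s|,|a_i|\bigr),
\end{equation*}
where $T_\nu(w,a)$ is the integral of $|u-v|f_\epsilon(u)f_\epsilon(v)$ over $\{|au|\vee|av|\le w/2\}$, bounded by $2\E|\epsilon_0|$ for $\nu>1$ and by $c(w/a)^{1-\eta}$ for $\nu\le 1$ via Lemma~\ref{Lem:moment RV}. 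Integrating against $P_S(ds)$ reduces the problem to a truncated moment of $|S|$ of order essentially $\nu+1-\delta$, finite in Case~I since $\E|S|^\beta<\infty$ with $\beta>\nu$, and controlled by Karamata's theorem applied to the regularly varying tail of $S$ in Cases~II and III.

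For $\mathcal{B}_i(z)=\pr(|S\epsilon|>z/(2|a_i|))$, Breiman's lemma (Case~I), its symmetric counterpart obtained by swapping the roles of $S$ and $\epsilon$ (Case~II), and the Embrechts--Goldie product-closure result (Case~III) each give regular variation of this tail; Potter's bound \eqref{eq:Potter ratio} then extracts a factor of $|a_i|^{\nu-\varepsilon}$ (Cases~I and III) or $|a_i|^{\beta-\varepsilon}$ (Case~II), which is absorbed into $c|a_i|^{\alpha_\nu}$ using $\sup_i|a_i|<\infty$. Dividing the resulting bounds for $\mathcal{A}_i$ and $\mathcal{B}_i$ by $\pr(R_i>z)=\pr(R>z)$, lower-bounded via the tail asymptotics \eqref{eq:R tail I}--\eqref{eq:R tail II} (or the regular variation of $\pr(|R|>z)$ together with \eqref{eq:non degen III} in Case~III) and Potter, produces the claimed estimate uniformly in $i\ge 0$ and $z\ge y_0$. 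The main obstacle will be Case~III, where $\pr(R>z)$ is not automatically a Breiman tail but only inherits regular variation from the Embrechts--Goldie closure: the assumption \eqref{eq:non degen III} is precisely what yields a nondegenerate lower bound on $\pr(R>z)$ in terms of one of the cleanly-behaved pieces $\pr(S_+X_+>z)$ or $\pr(S_-X_->z)$, and it must be threaded through the auxiliary parameters $\delta,\varepsilon$ in order to close the estimate.
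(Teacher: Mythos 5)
Your proposal is correct and follows essentially the same route as the paper's proof: the same decomposition into a main term (with $|a_iS_i\epsilon_0|,|a_iS_i\epsilon_0^*|\le z/2$) and a remainder $\pr(|a_iS\epsilon|>z/2)$, conditioning on $S_i=s$ so that Corollary \ref{Cor:key} applies at scale $z/|s|$, splitting the $S$-integral at $|s|=z$ to reduce to truncated moments of $|S|$, and handling Cases (I)--(III) via Breiman's lemma, Lemma \ref{Lem:moment RV}/Karamata, and the Embrechts--Goldie closure respectively. You also correctly isolate the one genuinely delicate point, namely that in Case (III) the non-degeneracy condition \eqref{eq:non degen III} is used to prove the lower bound $\pr(R>z)\ge c\,\pr(|S\epsilon|>z)$, which is exactly the paper's key claim \eqref{eq:R S eps}.
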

\begin{proof}
Assume without loss of generality $S_i\neq 0$ a.s.\ (otherwise condition on $\{S_i\neq 0\}$) and every $a_j\neq 0$, $j\ge 0$. Recall $(S,X)\EqD(S_i, X_i)$ and $R=SX$, and write  $P_S$   for the distribution of $S$. Suppose $z>0$. Below $y_0>0$ is a constant which does not depend on $i$, whose value may be increased if necessary each time when mentioned.
We have
\begin{align}
&\pr( R_i^* \le z  \ |\    R_i > z)= \frac{\pr\left(z- a_i   S_i\epsilon_0 <S_iY_i\le z -a_i  S_i\epsilon_0^*\right) }{ \pr( R_i >z) } \notag\\\le&   \frac{\pr\left( z- a_i   S_i\epsilon_0 <S_iY_i\le z -a_i  S_i\epsilon_0^* \, ,~  -z /2 \le a_iS_i\epsilon_0^*<a_i S_i\epsilon_0\le z/2 \right)}{ \pr( R >z) } +  \frac{\pr\left( |a_iS\epsilon|> z/2  \right)}{ \pr( R >z) }\notag  \\
=& A_i(z)+B_i(z). \label{eq:two term}
\end{align}

\noindent $\bullet$ Suppose  $\nu>1$.

For some $\delta\in (0,1)$ to be chosen later,    the numerator of   $A_i(z)$ above can be bounded using Corollary \ref{Cor:key} as
\begin{align}
&\int_{|s|\in (0, \infty)}  P_S(ds)\int_{   -z/2 \le a_i sv<a_isu \le z / 2     } \pr(sY_i\in (z-a_isu,z-a_isv] )  f_\epsilon(u)f_\epsilon(v)  du dv \notag\\
\le
&\int_{|s|\in (0, z]} P_S(ds) \int_{    |a_i sv|, |a_isu| \le z / 2     } c |a_i| |u-v|   |z/s|^{-\nu-1+\delta}   f_\epsilon(u)f_\epsilon(v)  du dv \notag\\ +&\int_{|s|\in ( z ,\infty)} P_S(ds)\int_{ |a_i sv|, |a_isu| \le z / 2      } c |a_i| |u-v|    f_\epsilon(u)f_\epsilon(v)  du dv\notag
\\\le & c |a_i|  z^{-\nu-1+\delta}  \E[|S|^{\nu+1-\delta}  ;|S|\le z ]    + c   |a_i| \pr(|S|>z ), \label{eq:two term S}
\end{align}
where in the last inequality above we have applied
\begin{equation}\label{eq:eps abs moment}
\int_{|a_isu|, |a_isv| \le z/2}  |u-v|  f_\epsilon(u)f_\epsilon(v)  du dv \le   \E  |\epsilon_0-\epsilon^*_0| \le 2\E|\epsilon_0|.
\end{equation}

We consider    the Cases  (I)$\sim$(III) in Assumption \ref{Ass:tail compar} separately.

\noindent \emph{Case (I)}.

Suppose  that $\delta\in (0,1)$ is chosen sufficiently close to $1$   so that $\nu+1-\delta\in (0,\beta)$ (recall $\nu<\beta$). Then $\E   |S| ^{\nu+1-\delta} <\infty$ and hence $\pr(|S|>z)\le c z^{-\nu-1+\delta}$ by Markov inequality.   Note that $\pr( R >z)\ge c z^{-\nu-1+\delta}$ when $z>y_0$, which is a consequence of  regular variation of $\pr(R>z)\sim c \pr(| \epsilon|>z)$ of index $-\nu$   as $z\rightarrow\infty$ as described in \eqref{eq:R tail I} and  Potter's bound \eqref{eq:Potter lower}. Combining these facts to   \eqref{eq:two term S} we conclude  that   for $z>y_0$,
\[
A_i(z)   \le  c |a_i|.
 \]
Next, observe that $\pr\left( |S\epsilon|> z  \right)\sim \E |S|^{\nu} \pr(|\epsilon|>z)$ as $z\rightarrow\infty$ by Breiman's Lemma. This implies that for all $z>0$ and $i\ge 0$, we have $\pr\left( |a_iS\epsilon|> z/2  \right)\le c \pr\left( |a_i \epsilon|> z/2  \right)$ for some large enough constant $c>0$.  Combining this with the aforementioned fact $\pr(R>z)\sim c \pr(| \epsilon|>z)$ as $z\rightarrow\infty$,       we have  for $z>y_0$ that
\begin{equation}\label{eq:B_i bound 1<nu<beta}
B_i(z)\le c \frac{\pr\left( |a_i  \epsilon|> z/2  \right)}{ \pr( |\epsilon| >z) }  \le  c   |a_i|,
\end{equation}
where in the last inequality we have applied  Potter's bound \eqref{eq:Potter ratio} and   fact $\sup_{j\ge 0}|a_j|<\infty$, as well as the restriction $\nu>1$.
So  putting these together we have for all $z>y_0$,
\begin{equation}\label{eq:goal R}
 \pr( R_i^* \le z  \ |\    R_i > z)\le c |a_i| .
\end{equation}

\noindent \emph{Case (II)}.

 In this case,   the tail of $S$ is regularly varying with index $-\beta>-\nu$, and the choice $\delta\in(0,1)$  always ensures $\beta<\nu+1-\delta$. By Lemma \ref{Lem:moment RV}, the truncated moment in the first term of  \eqref{eq:two term S} satisfies  for $z>y_0$ that
\begin{equation}\label{eq:S trunc moment}
\E[|S|^{\nu+1-\delta}  ;|S|\le z ]\le c z^{\nu+1-\delta}\pr(|S|>z).\end{equation}
By   \eqref{eq:R tail II}, we have    $\pr(R >z)\sim  c  \pr(|S|>z)$ as $z\rightarrow\infty$.
 Combining these facts,  one has for $z>y_0$ that
\begin{align*}
A_i(z)\le  c|a_i| .
\end{align*}
On the other hand, by  Breiman's Lemma, $\pr(|S\epsilon|>z)\sim \E|\epsilon|^{\beta} \pr(|S|>z)$ as $z\rightarrow\infty$.  Hence arguing similarly  as  Case (I) above,   we have for $z>y_0$,
\begin{equation*}
B_i(z)\le c  \frac{\pr\left(|a_i S|> z/2  \right)}{ \pr( |S| >z)}  \le  c  |a_i|,
\end{equation*}
 So \eqref{eq:goal R} holds in this case as well.

\noindent \emph{Case (III)}.

Now $\beta=\nu$. We claim that there exists a constant $c>0$ such that
\begin{align}\label{eq:R S eps}
\pr(R>z)\ge c \pr(|S\epsilon|>z)
\end{align}
for all $z>0$. We prove this below.
First we consider the case both $A_1,A_2>0$.
In view of \eqref{eq:tail linear two sides}, for some small enough constant $c>0$, we have $\pr(X_+>z)\ge c \pr(|\epsilon|>z)$ and $\pr(X_->z)\ge c \pr(|\epsilon|>z)$ for all $z>0$. This by independence implies that $\pr(S_+X_+>z)\ge c \pr(S_+|\epsilon|>z)$ and $\pr(S_-X_->z)\ge c \pr(S_-|\epsilon|>z)$. So
\[\pr(R>z)=\pr(S_+X_+>z)+\pr(S_-X_->z)\ge c \pr((S_++S_-) |\epsilon|>z)=c \pr( |S\epsilon|>z). \]
Now consider the case $A_2=0$ and $q>0$. The other case where $A_1=0$ and $q<1$ is similar and will be omitted.   Since $A_2=0$ implies $A_1>0$ and $q>0$, we have  $
  \pr( X_+ > z) \ge c\pr( |\epsilon| >z)
$ and $\pr(S_+   >z)\ge c\pr(|S|   >z)$  for some small enough constant $c>0$. Hence by independence, we have
$
  \pr(S_+X_+ > z) \ge c\pr(S_+ |\epsilon| >z)
$ and $\pr(S_+ |\epsilon| >z)\ge c\pr(|S   \epsilon| >z)$. Therefore, for all $z>0$,
\[
\pr(R>z)\ge  \pr(S_+X_+>z)   \ge  c\pr(|S \epsilon|>z).
\]
Hence \eqref{eq:R S eps} is concluded.

 Now  in view of \eqref{eq:two term S},  \eqref{eq:S trunc moment}  and \eqref{eq:R S eps}, for $z>y_0$,
\begin{equation}\label{eq:A_i(z) bound alpha=beta>1}
A_i(z)\le  c |a_i|  \frac{\pr(|S| >z)}{\pr(|S\epsilon|>z)}\le c |a_i|,
\end{equation}
where the last inequality follows from  $\pr(|S \epsilon|>z)\ge \pr(|\epsilon|\ge 1) \pr(|S|>z)$.
In addition, again by \eqref{eq:R S eps}, for $z>0$, we have
\begin{align}\label{eq:B_i(z) bound nu beta equal}
B_i(z)  \le c \frac{\pr\left(  |a_iS \epsilon|> z/2   \right) }{\pr\left(  |S \epsilon|> z   \right)}.
\end{align}
By  \cite[COROLLARY of Theorem 3]{embrechts1980closure}, $\pr\left(  |S \epsilon|> z   \right)$ is regularly varying with index $-\nu=-\beta<-1$ as $z\rightarrow\infty$. So by Potter's bound \eqref{eq:Potter ratio}, when $z>y_0$,
\[
B_i(z)\le c|a_i|.
\]
So \eqref{eq:goal R} holds in Case (III) as well.

\noindent $\bullet$ Suppose $\nu\le 1$.

Start  as   the case $\nu>1$ until the step before  \eqref{eq:two term S}. Note that now \eqref{eq:eps abs moment} may  not be applicable since $\E|\epsilon|$ is possibly infinite. Instead, applying $|u-v|\le |u|+|v|$, we bound the last two lines   above \eqref{eq:two term S}   by
\begin{align}\label{eq:two term small alpha}
&E_i(z)+ F_i(z)\notag:=\\& c|a_i| z^{-\nu-1+\delta} \E\left[   |\epsilon  | | S|^{\nu+1-\delta};\, |a_i\epsilon S|\le z, \,   |S|\le z \right]+c |a_i|  \E\left[   |\epsilon| ;\, |a_i\epsilon  S|\le z , \,    |S|> z \right],
\end{align}
where we fix $\delta\in (0,\nu)$  to be specified later.

By  Lemma \ref{Lem:moment RV},   Potter's bound \eqref{eq:Potter upper} and the fact $\sup_{j\ge 0}|a_j|<\infty$, with fixed $\eta\in(0,\nu)$, we have for all $|s|\in(0,z]$, $i\ge 0$ and $z>0$ that \[
\E\left[|\epsilon|; |\epsilon|\le z/(s|a_i|) \right]\le c (z s^{-1} |a_i|^{-1})^{1- \eta}.
\] Hence by independence and integrating out the randomness of $\epsilon$, for $z>0$,
\begin{align}\label{eq:E_i bound 1}
E_i(z)\le c |a_i|^{\eta} z^{-\nu+\delta-\eta}\E\left[  |S|^{ \nu-\delta+\eta}; \ |S|\le z    \right].
\end{align}
On the other hand, by independence,  Lemma \ref{Lem:moment RV} and Potter's bound \eqref{eq:Potter upper}, we have for $z>0$,
\begin{align}\label{eq: F_i bound}
F_i(z)\le c |a_i| \E[|\epsilon|; \  |\epsilon|\le |a_i|^{-1}]\pr(|S|>z)\le c |a_i|^{\eta}\pr(|S|>z).
\end{align}

\noindent \emph{Case (I)}.

Now $\nu<\beta$.
In this case, choose  $\eta\in (\delta,\nu)$ but sufficiently close to $\delta$, so that $\nu+\eta-\delta\le \beta $.  Then     $\E |S|^{\nu+\eta-\delta} <\infty$.   The bound \eqref{eq:E_i bound 1} simplifies to
\[
E_i(z)\le c |a_i|^{\eta} z^{-\nu+\delta-\eta}.
\]
Note, on the other hand,   that $\pr(R>z)\ge c z^{-\nu+\delta-\eta}$ for $z> y_0$ due to the  regular variation of $\pr(R>z)$ with index $-\nu$  in   \eqref{eq:R tail I} and Potter's bound \eqref{eq:Potter lower} since $\delta-\eta<0$.
Combining these above with \eqref{eq: F_i bound} and Markov inequality  $\pr(|S|>z)\le c z^{-\beta}\le c z^{-\nu+\delta-\eta}$ when $z>y_0>1$, we have for $z>y_0$
\[
A_i(z)\le c |a_i|^{\eta}.
\]
It follows from a similar argument   as  \eqref{eq:B_i bound 1<nu<beta} using Potter's bound \eqref{eq:Potter ratio}   that   for $z>y_0$,
\[
B_i(z)\le c|a_i|^{\eta}.
\]
Hence   for $z>y_0$,
\begin{equation}\label{eq:R alpha<1 goal}
\pr( R_i^* \le z  \ |\    R_i > z)\le c |a_i|^{\eta} .
\end{equation}
The conclusion follows by noting that $\delta$ and $\eta$ can be chosen  arbitrarily close to $\nu$.

\noindent \emph{Case (II)}.

Now $\beta<\nu$. Choosing again $0<\delta<\eta<\nu$, the bound \eqref{eq:E_i bound 1} in view of Lemma \ref{Lem:moment RV}  becomes for $z>y_0$,
\[
E_i(z) \le c |a_i|^{\eta}\pr(|S|>z).
\]
This time  $\pr(R>z)\sim c \pr(|S|>z)$ and $\pr(|S\epsilon|>z)\sim c \pr(|S|>z)$ as $z\rightarrow\infty$ in view of \eqref{eq:R tail II} and Breiman's Lemma respectively. Combining these with \eqref{eq: F_i bound}, we can deduce  the bound  $c |a_i|^{\eta}$ for  $A_i(z)$ when $z>y_0$.  The same bound   for  $B_i(z)$ follows similarly as the case $\nu>1$. So
\eqref{eq:R alpha<1 goal} holds.

 \noindent \emph{Case (III)}.

For this case we work with a bound different from \eqref{eq:E_i bound 1}.   Decompose the expectation in $E_i(z)$ in \eqref{eq:two term small alpha} into   $|\epsilon|\le 1$ and $|\epsilon|>1$ parts. Then  drop the restriction $|a_i\epsilon S|\le z$ in the part with $\epsilon\le 1$. Drop the restriction $|S|\le z$ and apply the inequality $|\epsilon|\le |\epsilon|^{\nu+1-\delta}$ in the part with $|\epsilon|>1$.   We then have the bound
\begin{align*}
E_i(z)&\le  c|a_i| z^{-\nu-1+\delta}\E[|S|^{\nu+1-\delta};\ |S|\le z]+ c |a_i|z^{-\nu-1+\delta} \E[|\epsilon S|^{\nu+1-\delta};\  |\epsilon S| \le z/|a_i|].
\end{align*}
 Both $\pr(|S|>z)$   and  $\pr(|\epsilon S|>z)$  (\cite{embrechts1980closure}) are regularly varying with index $-\nu=-\beta>-\nu-1+\delta$ as $z\rightarrow\infty$.  So by Lemma \ref{Lem:moment RV}   and Potter's bound \eqref{eq:Potter ratio}, with a fixed $\eta \in(0,\delta)$, we have for any $z>y_0$ that
\begin{align}\label{eq:E_i bound 2}
E_i(z)&\le    c|a_i| \pr(|S|>z)+ c |a_i|^{\delta-\nu}  \pr(|\epsilon S| > z/|a_i|)\notag\\
&\le c|a_i| \pr(|S|>z)+ c |a_i|^{ \eta} \pr(|\epsilon S| > z ).
\end{align}
 We have also $\pr(|S| >z)\pr(|\epsilon|\ge 1)\le  \pr(|S\epsilon|>z)$, and $\pr(R>z)\ge c \pr(|S\epsilon| >z)$ as in \eqref{eq:R S eps}, both of which hold   for all $z>0$. Combining these with \eqref{eq:E_i bound 2}  and  \eqref{eq: F_i bound} (choose  $\eta\in (0,\min(\delta,\nu))$), the bound   $c |a_i|^{\eta}$ holds for both $A_i(z)$ and $B_i(z)$ when $z>y_0$.
Hence for $z>y_0$,
\[
\pr( R_i^* \le z  \ |\    R_i > z)\le c |a_i|^{\eta}.
\]
The conclusion follows since $\delta$  and   $\eta$   can be chosen arbitrarily close to $\nu$.
\end{proof}

\begin{Cor}\label{Cor:TAS cond stoch vol}
Suppose the assumptions of Theorem \ref{Thm:Vol} holds.
Then the conclusion of Corollary \ref{Cor:TAS Linear} continues to hold for the stochastic volatility  extension \eqref{eq:R_i}.
\end{Cor}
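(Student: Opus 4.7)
The plan is simply to observe that Theorem \ref{Thm:Vol} delivers, for the stochastic volatility process $R_i=S_iX_i$, exactly the same uniform--in--$y$ bound on the TAS measure $\theta_y^{(R)}(i)$ as Theorem \ref{Thm:linear} delivers on $\theta_y^{(X)}(i)$; namely there exist $c>0$ and $y_0>0$ such that for every $y\ge y_0$ and $i\ge 0$,
\[
\theta_y^{(R)}(i)\le c|a_i|\ \text{if } \nu>1,\qquad \theta_y^{(R)}(i)\le c|a_i|^\eta\ \text{if } \nu\le 1,
\]
with $\eta\in(0,\nu)$ arbitrarily close to $\nu$. Once this is in hand, the derivation of the sufficient condition from Corollary \ref{Cor:TAS Linear} carries over word for word: the pointwise bound on $\theta_y^{(R)}(i)$ is uniform in $y>y_0$, so summing the $1/q$-th power yields
\[
\Theta_{y,q}^{(R)}=\sum_{i=0}^\infty\{\theta_y^{(R)}(i)\}^{1/q}\le\begin{cases} c\sum_{i\ge 0}|a_i|^{1/q}, & \nu>1,\\ c\sum_{i\ge 0}|a_i|^{\eta/q}, & \nu\le 1,\end{cases}
\]
again uniformly in $y>y_0$.

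To conclude $\mathrm{TAS}_q$, namely $\lim_{y\uparrow \mathcal{U}_R}\Theta_{y,q}^{(R)}<\infty$, it then suffices to choose $\eta$ and match the summability requirement against the standing assumption \eqref{eq:summability}, which guarantees $\sum_{j\ge 0}|a_j|^{\nu/(\nu+2)-\varepsilon}<\infty$ for some $\varepsilon>0$. This leads to the same case distinction as in Corollary \ref{Cor:TAS Linear}: when $\nu>1$, either $1/q$ is already no larger than $\nu/(\nu+2)-\varepsilon$ (in which case summability of $|a_i|^{1/q}$ follows from H\"older/monotonicity of $\ell^p$ norms for summable sequences bounded in modulus by $1$ eventually), or $1/q$ is larger, in which case the exponent $\nu/(\nu+2)-\varepsilon$ suffices; when $\nu\le 1$, we pick $\eta$ so that $\eta/q$ falls in the admissible range $[\nu/(\nu+2)-\varepsilon,\,\nu/q-\varepsilon]$, reproducing the four sub-cases defining $\vartheta(\nu,q,\varepsilon)$.

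In short, there is essentially no new work: Theorem \ref{Thm:Vol} has already absorbed all the analytic difficulty introduced by the multiplicative volatility factor $S_i$, and Corollary \ref{Cor:TAS cond stoch vol} is a mechanical transcription of Corollary \ref{Cor:TAS Linear} with the same exponent function $\vartheta(\nu,q,\varepsilon)$ and the same summability hypothesis on $(a_i)$. The only place where one needs a moment of care is to verify that $\mathcal U_R=\infty$ so that it indeed suffices to let $y\to\infty$ in the definition of $\Theta_{y,q}^{(R)}$; this follows from Assumption \ref{Ass:tail compar} together with the tail computations \eqref{eq:R tail I}--\eqref{eq:R tail II} (and the regular variation of $|R|$ in Case (III) via \cite{embrechts1980closure}), each of which certifies that $\pr(R>z)>0$ for all $z>0$. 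I do not anticipate any serious obstacle; the bookkeeping over the four sub-cases of $\vartheta$ is the only step that requires attention.
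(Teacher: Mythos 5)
Your proposal is correct and matches the paper's (implicit) argument exactly: the paper states this corollary without separate proof precisely because Theorem \ref{Thm:Vol} gives the same uniform bound on $\theta_y(i)$ as Theorem \ref{Thm:linear}, so the summation and case analysis preceding Corollary \ref{Cor:TAS Linear} carry over verbatim. Your extra check that $\mathcal{U}_R=\infty$ via \eqref{eq:R tail I}--\eqref{eq:R tail II} is a sensible bit of diligence but introduces nothing beyond what the paper already establishes.
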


\begin{Rem}
Remark \ref{Rem:conjecture} on the possibility of relaxing the restriction \eqref{eq:summability} also applies to the stochastic volatility type model $(R_i)$.
\end{Rem}

\begin{Rem}
The causal representation \eqref{eqn:gen proc} covers a wide class of  nonlinear time series models
beyond the stochastic volatility type models considered in this section, including
GARCH, autoregression with random coefficients, nonlinear autoregression, bilinear models, etc.   See,  for instance, Section 3 of \citet{liu2009strong}.  The verification of the TAS condition for these models requires nontrivial extensions  and is left for future works.
\end{Rem}

\section{The Max-Linear Extension: A Revisit}\label{sec:mmrevisit}
In this section, we revisit the max-linear extension that replaces the additive structure in (\ref{eq:linear proc}) by its maximal counterpart. \citet{Davis:Resnick:1989} presented a max-ARMA process that extends the usual additive ARMA process to its extreme-value counterpart. \citet{Hall:Peng:Yao:2002} considered the class of infinite-order moving-maximum processes, and showed that they are dense in the class of stationary processes whose finite-dimensional distributions are extreme-value of a given type. As commented in \citet{ZhangZ:2021}, the additive structure in traditional time series models cannot describe the extremal clusters and tail dependence satisfactorily in many applications, and it seems desirable to consider their non-additive extensions such as the max-linear process. \citet{Zhang:2021} studied the implication of the $\mathrm{TAS}_q$ condition on the moving-maximum process of \citet{Hall:Peng:Yao:2002} when the innovation distribution is Fr\'{e}chet, and we shall here extend their results to the case when the innovations are from a general non-negative regularly varying distribution. In particular, let $(\epsilon_j)_{j\in \bb{Z}}$ be i.i.d.\ non-negative random variables with regularly varying tail:
\begin{equation}\label{eq:max linear innovation tail}
 \pr(\epsilon_0>  x) =x^{-\nu}\ell(x)
\end{equation}
for some  function $\ell$ slowly varying at $+\infty$ and $\nu>0$. Let $\{a_j\}_{j\ge 0}$ be non-negative coefficients such that
\begin{equation}\label{eq:max linear a_j summability}
\sum_{j=0}^\infty  a_j ^{\nu'}<\infty,
\end{equation}
for some $\nu'\in (0,\nu)$.
Then as shown in   \citet{hsing1986extreme}, the moving-maximum process
\begin{equation}\label{eq:max linear proc}
X_i=\bigvee_{j=0}^\infty a_j\epsilon_{i-j}
\end{equation}
is a.s.\ finite and
\begin{equation}\label{eq:max linear tail}
\lim_{x\rightarrow\infty}\frac{\pr(X_0>x)}{\pr(\epsilon_0>x)}= \sum_{j=0}^\infty a_j^{\nu}.
\end{equation}

Following Section \ref{sec:TAS}, let $\epsilon_0^*$ be an i.i.d.\ copy of $\epsilon_0$ which is independent of $(\epsilon_j)$. Define $X_i^*$ as $X_i$ except that $\epsilon_0$ is replaced by $\epsilon_0^*$. Introduce
\[
Y_i=\bigvee_{j\ge 0, \ j\neq i}^\infty a_j \epsilon_{i-j}.
\]
Then $X_i=Y_i \vee (a_i\epsilon_0)$ and $X_i^*=Y_i \vee (a_i\epsilon_0^*)$.
So by \eqref{eq:max linear tail} and Potter's bound \eqref{eq:Potter ratio}, there exists $y_0>0$ which does not depend on $i$, such that for $z\ge y_0$,
\begin{align}
\pr(X_i^*\le z| X_i>z)&=\frac{\pr(Y_i\le z) \pr( a_i\epsilon_0^*\le z) \pr(a_i\epsilon_0> z  )}{\pr(X_0>z)}\le \frac{\pr(a_i\epsilon_0> z  )}{\pr(X_0>z)}\notag
\\&\le  c a_i^{\eta},\label{eq:max linear bound}
\end{align}
where $\eta>0$ can be fixed arbitrarily close to $\nu$. Hence we have proved the following.
\begin{Cor}\label{Cor:TAS cond max linear}
 $\mathrm{TAS}_q$ condition \eqref{eq:TAS}  holds for the moving-maximum process \eqref{eq:max linear proc}    if $\sum_{i\ge 0}  a_i ^{\eta/q}<\infty$ for some $\eta\in (0,\nu)$.
\end{Cor}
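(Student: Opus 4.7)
The plan is to observe that the corollary is essentially a direct consequence of the uniform bound \eqref{eq:max linear bound} that was derived in the preceding discussion, so the proof reduces to summing a geometric-type inequality and invoking the definition of $\mathrm{TAS}_q$ in \eqref{eq:TAS}.

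First, I would fix $\eta\in(0,\nu)$ as in the hypothesis and note that \eqref{eq:max linear bound} holds for this $\eta$ (by possibly increasing $y_0$; the constant $c$ in Potter's bound \eqref{eq:Potter ratio} depends on $\eta$ but not on $i$). Then, since the bound in \eqref{eq:max linear bound} is uniform in $z \ge y_0$, taking the supremum over $z \ge y$ for any $y \ge y_0$ yields directly
\[
\theta_y(i) \;=\; \sup_{z\ge y} \pr(X_i^*\le z \mid X_i > z) \;\le\; c\, a_i^{\eta},\qquad i\ge 0.
\]

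Next, I would raise this to the $1/q$ power and sum over $i$, which gives
\[
\Theta_{y,q} \;=\; \sum_{i=0}^\infty \theta_y(i)^{1/q} \;\le\; c^{1/q} \sum_{i=0}^\infty a_i^{\eta/q},
\]
and the right-hand side is finite by assumption, with a bound that does not depend on $y$ as long as $y \ge y_0$. Since the moving-maximum process \eqref{eq:max linear proc} has unbounded upper support (under \eqref{eq:max linear innovation tail} with $\ell>0$ and at least one $a_j>0$), we have $\mathcal{U}_X = +\infty$, and letting $y\uparrow \infty$ preserves the uniform bound, yielding $\lim_{y\uparrow \mathcal{U}_X} \Theta_{y,q} < \infty$, which is exactly \eqref{eq:TAS}.

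There is no real obstacle here: all the analytical work—the use of the regular-variation tail \eqref{eq:max linear tail}, Potter's bound \eqref{eq:Potter ratio}, and the key observation that replacing $\epsilon_0$ by $\epsilon_0^*$ in the max-linear representation affects only the single term $a_i \epsilon_0$—is already carried out in the display \eqref{eq:max linear bound}. The only minor care needed is choosing $\eta$ in \eqref{eq:max linear bound} to match the $\eta$ in the hypothesis (which is permissible since $\eta$ in \eqref{eq:max linear bound} may be any value in $(0,\nu)$), so that the summability assumption $\sum_i a_i^{\eta/q} < \infty$ applies directly to the bound.
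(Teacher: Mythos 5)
Your proposal is correct and follows exactly the paper's route: the bound \eqref{eq:max linear bound} already gives $\theta_y(i)\le c\,a_i^{\eta}$ uniformly in $i$ for $y\ge y_0$, and the corollary is then immediate from summing $\theta_y(i)^{1/q}\le c^{1/q}a_i^{\eta/q}$ and the definition \eqref{eq:TAS}. Nothing is missing.
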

\begin{Rem}
It is possible to slightly improve \eqref{eq:max linear bound} for certain slowly varying function $\ell(x)$ in \eqref{eq:max linear innovation tail}. For example in \citet{Zhang:2021}, the bound \eqref{eq:max linear bound} can be strengthened to $ca_i^{\nu}$ for Fr\'echet distribution which leads to the sufficient condition $\sum_{i\ge 0}  a_i ^{\nu/q}<\infty$ for $\mathrm{TAS}_q$. Similar improvements can also be considered for Theorems \ref{Thm:linear} and \ref{Thm:Vol}. We do not pursue such a refinement here since it does not lead to a substantial  statistical consequence.
\end{Rem}

\section{Extensions via Monotone Transforms}\label{sec:mono}
Recall a process $X=(X_i)$ given by  \eqref{eqn:gen proc}  satisfies
 the $\mathrm{TAS}_q$   condition  if $\lim_{y \uparrow \mathcal{U}_X} \Theta_{y,q}^{(X)} < \infty$ (cf.\ \eqref{eq:TAS}). Under the $\mathrm{TAS}_q$ condition, there exists a real \[x^*_q=\inf\{y<\mathcal{U}_X:\ \Theta_{y,q}^{(X)} < \infty\}<\mathcal{U}_X.\]  Note that $  \Theta_{y,q}^{(X)} <\infty$ for any $y>x^*_q$. The following proposition provides sufficient conditions for $\mathrm{TAS}_q$ to carry over through monotonic transforms.
\begin{proposition}\label{Pro:monotone}
Suppose a stationary process $X=(X_i)$  is given by the model \eqref{eqn:gen proc}, whose marginal distribution has   lower and upper end points  $\mathcal{L}_X=\sup\{x\in \mathbb{R}:\ \pr(X_0\ge x)=0\}$ and $\mathcal{U}_X=\inf\{x\in \mathbb{R}:\ \pr(X_0\le x)=1\}$ respectively.
 Suppose  $X$ satisfies
 the $\mathrm{TAS}_q$   condition, $q>0$.
 
Let $K:[\mathcal{L}_X,\mathcal{U}_X]\mapsto [-\infty,\infty]$ be a non-decreasing function. Suppose  the transformed stationary process $Y=(Y_i)=(K(X_i))$ has marginal 
upper end point 
$\mathcal{U}_Y$    satisfying $\mathcal{U}_Y=K(\mathcal{U}_X)$.
Then $Y$ satisfies the   $\mathrm{TAS}_q$   condition  under either of the following conditions:
\begin{enumerate}[(a)]
\item The function $K$ is strictly increasing on $(x_0,\mathcal{U}_X]$ for some $x_0<\mathcal{U}_X$; 
\item  We have  $x_1:=\inf\{x\in [\mathcal{L}_X,\mathcal{U}_X]: K(x)=\mathcal{U}_Y\}>x^*_q$, and  there exists   $x_0<x_1$ such that   $P(X_0=x)=0$ for all $x\in (x_0,x_1)$.

\end{enumerate}
\end{proposition}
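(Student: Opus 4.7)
The plan is to relate the TAS measure for the transformed process $Y$ to that of $X$ via the generalized inverse of $K$,
\[
\phi(z):=\inf\{x\in[\mathcal{L}_X,\mathcal{U}_X]:K(x)>z\},\qquad z<\mathcal{U}_Y.
\]
Since $K$ is non-decreasing, $\{K(x)>z\}$ is a right-end interval, so we have the basic sandwich inclusions $\{X_i>\phi(z)\}\subseteq\{K(X_i)>z\}\subseteq\{X_i\geq\phi(z)\}$ together with the complementary sandwich $\{X_i^*<\phi(z)\}\subseteq\{K(X_i^*)\leq z\}\subseteq\{X_i^*\leq\phi(z)\}$. These become equalities up to null sets whenever the marginal law of $X_0$ has no atom at $\phi(z)$, which by stationarity transfers to $X_i$ and $X_i^*$.

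The first step is to verify that $\phi(y)>x^*_q$ for $y$ sufficiently close to $\mathcal{U}_Y$, so that the finiteness $\Theta_{\phi(y),q}^{(X)}<\infty$ is available. Under hypothesis (a), for $z>K(x_0)$ the strict monotonicity of $K$ on $(x_0,\mathcal{U}_X]$ makes $\phi$ the usual generalized inverse, and $\phi(z)\uparrow\mathcal{U}_X$ as $z\uparrow\mathcal{U}_Y$. Under hypothesis (b), the relation $K(x)=\mathcal{U}_Y$ for $x>x_1$ forces $\phi(z)\leq x_1$ for every $z<\mathcal{U}_Y$; picking any $x'\in(x^*_q,x_1)$ and then $y>K(x')$ gives $\phi(z)\in(x',x_1]$ for all $z\geq y$, so $\phi(y)>x^*_q$.

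The second step is the pointwise bound $\theta_y^{(Y)}(i)\leq\theta_{\phi(y)}^{(X)}(i)$. In the atom-free regime the sandwich gives the identity
\[
\pr(Y_i^*\leq z\mid Y_i>z)=\pr(X_i^*\leq\phi(z)\mid X_i>\phi(z)),
\]
and taking the supremum over $z\geq y$, together with monotonicity of $\phi$, yields the stated bound. Under (b), the no-atom hypothesis on $(x_0,x_1)$ validates the identity whenever $\phi(z)\in(x_0,x_1)$; the sole remaining sub-case is $\phi(z)=x_1$, which arises only if $K$ jumps at $x_1$. In that sub-case $\{Y_i>z\}=\{X_i\geq x_1\}$ is constant in $z$ over the jump gap, and $\pr(Y_i^*\leq z\mid Y_i>z)=\pr(X_i^*<x_1\mid X_i\geq x_1)$ equals the limit of $\pr(X_i^*\leq x_1-\eta\mid X_i>x_1-\eta)$ as $\eta\downarrow 0$, since $(x_1-\eta,x_1)\subseteq(x_0,x_1)$ is atom-free; the limit is bounded by $\theta_{\phi(y)}^{(X)}(i)$ in view of monotonicity of $\theta^{(X)}$ in its level. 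Under (a), the strictly monotone $K$ has at most countably many jumps and the marginal of $X_0$ has at most countably many atoms, so the same left-approximation applies outside the exceptional set, and the $\sup$ over $z$ can be realized along a sequence avoiding it.

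Summing in $i$ gives $\Theta_{y,q}^{(Y)}\leq\Theta_{\phi(y),q}^{(X)}$, and the first step ensures that the right-hand side is finite once $y$ is close enough to $\mathcal{U}_Y$. Letting $y\uparrow\mathcal{U}_Y$ and invoking $\lim_{y'\uparrow\mathcal{U}_X}\Theta_{y',q}^{(X)}<\infty$ establishes the $\mathrm{TAS}_q$ condition for $Y$. The main technical obstacle I anticipate is the atom-handling: making the limiting identification between $\pr(X_i^*<w\mid X_i\geq w)$ and $\pr(X_i^*\leq w\mid X_i>w)$ rigorous at points where $K$ jumps onto an atom of $X_0$. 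This is precisely why the hypotheses separate into (a), where strict monotonicity leaves only a countable exceptional set that can be sidestepped by right-approximation, and (b), where an explicit atom-free slab $(x_0,x_1)$ is assumed so the limiting step is automatic.
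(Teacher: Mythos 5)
Your proof is correct and follows essentially the same route as the paper's: both reduce $\theta_y^{(Y)}(i)$ to $\theta_w^{(X)}(i)$ for a level $w>x_q^*$ by passing through the boundary point $\phi(z)=b(z)$ of the level sets $\{x: K(x)>z\}$, and then sum over $i$. Your explicit left-limit treatment of the case where $K$ jumps onto an atom of $X_0$ is if anything slightly more careful than the paper's argument, which disposes of that edge case only implicitly via the strict-monotonicity and no-atom hypotheses.
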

\begin{Rem}
Condition (a) says $K$ is ultimately strictly increasing. In Condition (b), note that $x_1=\mathcal{U}_X$ if $K(x)<\mathcal{U}_Y$ for all $x<\mathcal{U}_X$, under which $x_1>x_q^*$ always holds if $X$ satisfies $\mathrm{TAS}_q$. The second assumption in Condition (b) imposes  ultimate continuity of the marginal distribution $X_0$.

The assumption $\mathcal{U}_Y=K(\mathcal{U}_X)$ is made without loss of generality. In general, it is possible that $K(\mathcal{U}_X)>\mathcal{U}_Y$. But since $\pr\left(Y_0>\mathcal{U}_Y\right)=0$, one may modify the definition of $K$ by a  truncation  as  $K1_{\{ K\le  \mathcal{U}_Y\}}+\mathcal{U}_Y1_{\{ K> \mathcal{U}_Y\}}$ without changing $Y$ almost surely. 

In the case where $K$ is only defined on the open interval $(\mathcal{L}_X,\mathcal{U}_X)$ (similarly for other half-open-type intervals), one may without loss of generality extend the domain of $K$ to  $[\mathcal{L}_X,\mathcal{U}_X]$ by setting $K(\mathcal{L}_X)=\lim_{u\downarrow
 \mathcal{L}_X} K(x)$ and $K(\mathcal{U}_X)=\lim_{u\uparrow\mathcal{U}_X}K(x)$.
\end{Rem}

 \begin{proof}[Proof of Proposition \ref{Pro:monotone}]

 We follow the notation in Section \ref{sec:TAS}.
 
  \noindent(a)
 Let $K^{-1}$ denote the inverse of $K$ when the latter is restricted to $(x_0,\mathcal{U}_X]$.  With $ K  (x_0,\mathcal{U}_X]$  denoting the image of   $(x_0,\mathcal{U}_X]$ under $K$, observe that $\inf  K (x_0,\mathcal{U}_X]<\mathcal{U}_Y$. So  with $ \inf  K (x_0,\mathcal{U}_X]<y< \mathcal{U}_Y$, one has
\begin{align*}
\sup_{z\ge y} \pr(K(X_i^*) \leq z \mid K(X_i) > z)&\le  \sup_{ \ z\in  K  (x_0,\mathcal{U}_X] } \pr( X_i^* \leq K^{-1}(z) \mid X_i > K^{-1}(z))
\\&\le \sup_{u\ge  K^{-1}(x_0)} \pr( X_i^* \leq u \mid X_i > u)= \theta_{K^{-1}(x_0)}^{(X)}(i).  
\end{align*}
The conclusion follows if, without loss of generality,  $x_0$ is chosen sufficently close to $\mathcal{U}_X$   so that  $K^{-1}(x_0)>x_q^*$.

\medskip

\noindent(b)
 For  $z\in [\mathcal{L}_Y,\mathcal{U}_Y]$, we  define $I_z=\{x\in [\mathcal{L}_X, \mathcal{U}_X]:\  K(x)\le z \}$ and $J_z=\{x\in  [\mathcal{L}_X, \mathcal{U}_X]:\ K(x)> z \}$, both of which are intervals due to the monotonicity of $K$.  Set $b(z)=\sup I_z=\inf J_z$, which is non-decreasing in $z$.  Assume without loss of generality $x_0\in (x_q^*,x_1)$.  

We claim that as  $z\uparrow \mathcal{U}_Y$, we have $b(z)\uparrow  x_1$. Indeed, otherwise, there exists $x_1'<x_1$  such that  $b(z)\le x_1'$ for any $z<\mathcal{U}_Y$. Hence  $K(x)< \mathcal{U}_Y$  implies $x\le x_1'$, which contradicts with  the definition of $x_1$.

Now with the claim above, we can choose $y<\mathcal{U}_Y$  sufficiently close to $ \mathcal{U}_Y$  so that $x_1>b(y)>x_0>x_q^*$. Then applying the assumptions, we have
\begin{align*}
\sup_{z\ge y}\pr(K(X_i^*) \leq z \mid K(X_i) > z)&=\sup_{z\ge y}\pr( X_i^* \in I_z \mid  X_i\in J_z)=\sup_{z\ge y }\pr( X_i^* \le b(z)  \mid  X_i> b(z))
\\ & \le    \sup_{u\ge b(y) }\pr( X_i^* \le u  \mid  X_i> u) ,
\end{align*}
and the conclusion   follows.

\begin{example}
Consider a linear process $(X_i)$ as in \eqref{eq:linear proc}, which satisfies the $\mathrm{TAS}_q$ condition, $q>0$ (cf.\ Corollary \ref{Cor:TAS Linear} and Remark \ref{Rem:conjecture}).  Based on the assumptions made, the marginal distribution of $X_0$ is typically continuous (cf.\ the Proof of Lemma \ref{Lem:control}), and we shall assume so.  

To model integer-valued tail-dependent data, one may consider $(Y_i)=(K(X_i))=(\lfloor X_i \rfloor)$, where $K(x)=\lfloor  x \rfloor$ is the floor function (i.e., greatest integer not exceeding $x$).  Based on Proposition \ref{Pro:monotone}, in particular, applying Condition (b) (note that $x_1=\mathcal{U}_X=\infty$ in this case), the integer-valued process $(Y_i)$ also satisfies $\mathrm{TAS}_q$.

 The same consideration applies to the stochastic volatility extension in Section \ref{sec:stochasticvolatilitymodel} and the max-linear process in Section \ref{sec:mmrevisit}.
\end{example}

 \end{proof}

\section{Application: Limit Theorems in Statistical Context}\label{application}
In this section, we provide implications of the developed results on some limit theorems of tail quantities with statistical motivations.
\subsection{High Quantile Regression}\label{subsec:highquantileregression}

We first consider the high quantile regression problem studied in \citet{Zhang:2021}. Suppose we observe the $n$-th row of a triangular array which consists of response variables $U_{1,n},\ldots,U_{n,n} \in \mathbb R$ associated with a set of explanatory variables $W_{1,n},\ldots,W_{n,n} \in \mathbb R^p$ according to the quantile regression model \citep{Koenker:Bassett:1978}
\begin{equation*}
U_{i,n} = W_{i,n}^\top \beta_n + X_{i,n},
\end{equation*}
where $^\top$ denotes the transpose, $\beta_n \in \mathbb R^p$ is the regression coefficient for the $(1-\alpha_n)$-th quantile, and $X_{i,n} = U_{i,n} - W_{i,n}^\top \beta_n$ is the auxiliary variable satisfying $\pr(X_{i,n} \leq 0) = \pr(U_{i,n} \leq W_{i,n}^\top \beta_n) = 1-\alpha_n$. The quantile regression coefficient $\beta_n$ can then be estimated by the high quantile regression estimator
\begin{equation}\label{eqn:betanhat}
\hat{\beta}_n = \mathop{\mathrm{argmin}}_{\eta \in \mathbb R^p} \sum_{i=1}^n \phi_{1-\alpha_n}(U_{i,n} - W_{i,n}^\top \eta),
\end{equation}
where $\phi_{1-\alpha_n}(u) = (1-\alpha_n) u^+ + \alpha_n(-u)^+$ is the check function with $u^+ = \max(u,0)$. Compared with the traditional quantile regression \citep{Koenker:Bassett:1978,Koenker:2005}, the high quantile regression in (\ref{eqn:betanhat}) requires the quantile level $1-\alpha_n$ to approach the unit as the sample size increases to capture the tail phenomena. Assuming that the auxiliary process $(X_{i,n}) \in \mathrm{TAS}_2$ (a triangular array variant), under some mild conditions on the smoothness of the marginal distribution and the design matrix, \citet{Zhang:2021} obtained the consistency and the central limit theorem for the high quantile regression estimator (\ref{eqn:betanhat}); see Theorems 1 and 2 of \citet{Zhang:2021}.

In many applications, one is interested in estimating a high quantile of a given stationary tail dependent time series, which relates to the situation when $W_{i,n} \equiv 1$. In this case, one observes a stationary time series $(U_{i,n})=(U_i)$ whose marginal distribution is denoted by $F(u) = \pr(U_{i} \leq u)$, and (\ref{eqn:betanhat}) can still be used to obtain an estimator for the $(1-\alpha_n)$-th quantile $\beta_n$. We make the following assumption.
\begin{itemize}
\item[(Q)] There exists an $\alpha \in (0,1)$ such that $F(\cdot)$ is continuously differentiable with uniformly bounded and strictly positive derivative $f(\cdot)$ in its upper tail $\{F^{-1}(1-\alpha),F^{-1}(1)\}$ with $|F^{-1}(1) - F^{-1}(1-\alpha)| > 0$.
\end{itemize}
Assumption (Q) mostly concerns the smoothness of the underlying distribution $F(\cdot)$ in the tail part and is satisfied by many commonly used distributions. Let $X_{i,n} = U_i - \beta_n$ be the associated auxiliary variable, the following theorem provides the consistency and central limit theorem of $\hat \beta_n$, which follows from Theorems 1 and 2 of \citet{Zhang:2021},   with some of the conditions  simplified for the current intercept case.

\begin{theorem}[\citealp{Zhang:2021}]\label{thm:HQ}
Assume (Q), $(U_i) \in \mathrm{TAS}_2$, $\alpha_n \to 0$ and $n\alpha_n \to \infty$. If
\begin{equation*}
\psi_n = (n\alpha_n)^{1/2} {f_n(0) \over 1-F_n(0)} \to \infty
\end{equation*}
and
\begin{equation*}
\max_{1 \leq i \leq n} \sup_{|\eta| \leq c} \left|{f_n(\psi_n^{-1} \eta) - f_n(0) \over f_n(0)}\right| \to 0
\end{equation*}
for any $c < \infty$, then
\begin{equation*}
\hat \beta_n - \beta_n = O_p(\psi_n^{-1}).
\end{equation*}
If in addition the limit
\begin{equation*}
\rho_k = \lim_{n \to \infty} \mathrm{cor}(1_{\{X_{0,n} > 0\}},1_{\{X_{k,n} > 0\}})
\end{equation*}
exists for each $k \in \mathbb Z$ and $\sum_{k \in \mathbb Z} \rho_k > 0$, then
\begin{equation*}
\psi_n(\hat \beta_n - \beta_n) \to N\left(0,\sum_{k \in \mathbb Z} \rho_k\right).
\end{equation*}
\end{theorem}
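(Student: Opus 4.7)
The plan is to reduce this to Theorems 1 and 2 of \citet{Zhang:2021} by specializing their high-quantile regression results to the intercept-only design $W_{i,n}\equiv 1$. First I would verify that the auxiliary process $(X_{i,n})=(U_i-\beta_n)$ inherits $\mathrm{TAS}_2$ from $(U_i)$: the coupled version of $U_i$ shifted by a deterministic $\beta_n$ is simply the shifted coupled version, so $\theta^{(X_n)}_{y-\beta_n}(i)=\theta^{(U)}_y(i)$ and summability transfers verbatim. With this identification, the hypotheses stated here are direct (simplified) instances of the conditions imposed in \citet{Zhang:2021} for a trivial design matrix.

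For the $O_p(\psi_n^{-1})$ rate, I would work with the rescaled convex objective
\[
Z_n(\eta)=\sum_{i=1}^n\left\{\phi_{1-\alpha_n}(X_{i,n}-\psi_n^{-1}\eta)-\phi_{1-\alpha_n}(X_{i,n})\right\},
\]
apply Knight's identity to split $Z_n(\eta)$ into a linear-in-$\eta$ term proportional to the tail-indicator partial sum $S_n:=\sum_{i=1}^n(\indicator{X_{i,n}>0}-\alpha_n)$ and an integral remainder whose conditional expectation, after scaling by $f_n(0)/[1-F_n(0)]$, concentrates near the deterministic quadratic $\eta^2/2$. The choice $\psi_n=(n\alpha_n)^{1/2}f_n(0)/[1-F_n(0)]$ is exactly what balances these two pieces: a variance bound of the form $\mathrm{Var}(S_n)\le c\,\Theta_{y_n,2}^2\,n\alpha_n$ at level $y_n=F^{-1}(1-\alpha_n)$, supplied by $\mathrm{TAS}_2$, makes the linear term $O_p(1)$, while the hypothesis on $f_n(\psi_n^{-1}\eta)/f_n(0)$ together with assumption (Q) makes the remainder $O_p(1)$, both uniformly on compact sets of $\eta$. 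A standard convexity lemma then converts pointwise boundedness of the rescaled minimizer of a convex process into $\psi_n(\hat\beta_n-\beta_n)=O_p(1)$.

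For the central limit theorem, the plan is to upgrade the variance bound to distributional convergence
\[
(n\alpha_n)^{-1/2} S_n \ConvD N\!\left(0,\sum_{k\in\bb{Z}}\rho_k\right),
\]
and then to establish the finite-dimensional convergence $Z_n(\eta)\Rightarrow -\eta G+\eta^2/2$ with $G\sim N(0,\sum_k\rho_k)$. The argmin continuous mapping theorem for convex processes will then deliver the claimed weak limit with the same variance constant. Existence and finiteness of $\sum_k\rho_k$ follows from the pointwise existence of $\rho_k$ assumed in the statement together with the summability of $\theta_y(k)^{1/2}$ implied by $\mathrm{TAS}_2$.

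The main obstacle is precisely this last CLT for $S_n$. Because $y_n\uparrow\mathcal{U}_U$, the tail observations are simultaneously rare and potentially clustered, and the usual anti-clustering conditions that accompany a mixing-based CLT may fail or be hard to check. The role of $\mathrm{TAS}_2$ is to bypass this: uniform boundedness of $\Theta_{y,2}$ as $y\uparrow\mathcal{U}_U$ controls the cumulative tail adversarial effect on the indicator process and, through a big-block/small-block coupling carried out in the proof of Theorem~2 of \citet{Zhang:2021}, produces the Gaussian limit with variance $\sum_k\rho_k$. This CLT is the key input and is reused here without modification; the remaining work, sketched in the second paragraph above, is the routine convexity argument common to quantile M-estimation.
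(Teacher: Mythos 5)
Your proposal is correct and matches the paper's approach: the paper offers no independent proof of this statement, simply noting that it follows from Theorems 1 and 2 of \citet{Zhang:2021} with the conditions simplified for the intercept-only design, which is exactly your opening reduction (including the observation that the deterministic shift $X_{i,n}=U_i-\beta_n$ preserves $\mathrm{TAS}_2$). Your subsequent sketch of Knight's identity, the convexity argument, and the $\mathrm{TAS}_2$-based CLT for the tail-indicator partial sum is a faithful outline of the internals of Zhang's own proof, but it goes beyond what this paper supplies or requires.
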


Assumptions concerning $F(\cdot)$ and $f(\cdot)$ in the above theorem can be verified for a number of distribution functions, including the uniform, exponential, normal and Pareto distributions; see for example the discussions in \citet{Zhang:2021}. We shall in the following provide a discussion on the tail adversarial stability condition that $(U_{i}) \in \mathrm{TAS}_2$.

For the linear process (\ref{eq:linear proc}) with S$\nu$S innovations, $\nu\in (0,2)$, by the discussion in Section \ref{sec:linearprocesses}, one can show that the $\mathrm{TAS}_2$ condition needed for high quantile regression inference as in \citet{Zhang:2021} is satisfied if the coefficients
\begin{equation*}
a_i \sim ci^{-\zeta},\quad i\rightarrow\infty,
\end{equation*}
for some $\zeta > \max(2,2/\nu)$.  For more general linear regularly varying process  with index $-\nu$, $\nu>0$,  satisfying the assumptions of Theorem \ref{Thm:linear}, we need
$
\zeta >  \max (2, 1+2/\nu)$ under the power decay condition for $a_i$  above.
By Corollary \ref{Cor:TAS cond stoch vol}, this will continue to hold for the stochastic volatility extension given in (\ref{eq:R_i}) as well. As a comparison, \citet{Chernozhukov:2005} studied high quantile regression under the strong mixing framework of \citet{Rosenblatt:1956} and used an additional condition to control the joint probability of nearby tail events. Such a condition can essentially be interpreted as a negligibility condition on tail dependence, and is generally not expected to hold for processes exhibiting nonnegligible tail dependence. Therefore, the TAS framework seems to provide a convenient framework for studying high quantile regression of tail dependent time series data.

\subsection{Tail Autocorrelation Analysis}\label{subsec:tailautocorrelation}
We in this section consider the problem of tail autocorrelation analysis, which extends the traditional autocorrelation analysis to the tail setting. For this, let $x_n \to \infty$ be an extremal threshold, then the degree of tail dependence at lag $k$ can be quantified by the conditional probability $\pr(X_{1+k} > x_n \mid X_1 > x_n)$ as proposed in \citet{Zhang:2005}; see also \citet{Linton:Whang:2007} when the threshold $x_n$ is represented using quantiles. The tail autocorrelation at lag $k$ is then defined as
\begin{equation*}
\tau_{x_n}(k) = {\pr(X_{1+k} > x_n \mid X_1 > x_n) - \pr(X_{1+k} > x_n) \over 1-\pr(X_1 > x_n)},
\end{equation*}
which standardizes the conditional probability $\pr(X_{1+k} > x_n \mid X_1 > x_n)$ in the form of a correlation coefficient. \citet{Zhang:2022} established the consistency and a two-phase central limit theorem of sample tail autocorrelations under the tail adversarial stability framework, where it was assumed that
\begin{itemize}
\item[(Z1)] the underlying process $(X_i) \in \mathrm{TAS}_q$ for some $q > 4$; and
\item[(Z2)] the extremal threshold satisfies $\bar F(x_n) \to 0$ and $n\bar F(x_n) \to \infty$,
\end{itemize}
with $\bar F(x_n) = \pr(X_1 > x_n)$ being the marginal survival function. By Corollary \ref{Cor:TAS Linear} with $q>4$, condition (Z1) holds for the regularly varying linear process (\ref{eq:linear proc}) if
\begin{equation*}
\sum_{i=0}^\infty |a_i|^{\iota} < \infty
\end{equation*}
for some $\iota < \min(\nu/4,1/4)$. Condition (Z2) is very mild as $\bar F(x_n) \to 0$ only requires the threshold $x_n$ to be in the tail and $n\bar F(x_n) \to \infty$ essentially requires the amount of the data in the tail goes to infinity so that we can have the consistency without assuming any parametric assumption on the tail.

On the other hand, \citet{Davis:Mikosch:2009} considered adopting the strong mixing framework and provided a central limit theorem for sample tail autocorrelations in their Corollary 3.4 that aligns with the Phase I result of \citet{Zhang:2022}. Let $\mathcal F_{i,j} = \sigma(X_k, i \leq k \leq j)$ be the $\sigma$-field generated by $X_i,\ldots,X_j$ for $i \leq j$, it was assumed in \citet{Davis:Mikosch:2009} that
\begin{itemize}
\item[(DM1)] the underlying process $(X_i)$ is $\alpha$-mixing and the strong mixing coefficient
    \begin{equation*}
    \alpha(i) = \sup_{A \in \mathcal F_{-\infty}^k,\ B \in \mathcal F_{k+i}^\infty} |\pr(A \cap B) - \pr(A)\pr(B)|
    \end{equation*}
    satisfies
    \begin{equation*}
    \lim_{n \to \infty} m_n \sum_{i=r_n}^\infty \alpha(i) = 0
    \end{equation*}
    for some $m_n, r_n \to \infty$ with $\lim_{n \to \infty} m_n\pr(|X_1| > x_n) = 1$, $m_n/n \to 0$ and $r_n/m_n \to 0$;
\item[(DM2)] for all $\varpi > 0$,
    \begin{equation*}
    \lim_{k \to \infty} \limsup_{n \to \infty} m_n \sum_{i=k}^{r_n} \pr(|X_i| > \varpi x_n, |X_0| > \varpi x_n) = 0;
    \end{equation*}
    and
\item[(DM3)] $n\alpha_{r_n}/m_n \to 0$ and $m_n = o(n^{1/3})$, where $m_n = o(n^{1/3})$ can be replaced by
    \begin{equation*}
    {m_n^4 \over n} \sum_{i=r_n}^{m_n} \alpha(i) \to 0\ \mathrm{and}\ {m_n r_n^3 \over n} \to 0.
    \end{equation*}
\end{itemize}
We shall here verify conditions (DM1)--(DM3) for the regularly varying linear process (\ref{eq:linear proc}). \citet{Davis:Mikosch:2009} considered the special case of a finite-order ARMA model, for which the coefficient $a_i$ in its linear representation follows a geometric decay. In this case, the strong mixing coefficient also follows a geometric decay, which largely simplified the verification of conditions (DM1)--(DM3). As before, we assume that
\begin{equation*}
a_i \sim ci^{-\zeta},\quad i\rightarrow\infty,
\end{equation*}
for some $\zeta > 0$. For simplicity of illustration, we also assume that the regularly varying innovations in (\ref{eq:linear proc}) satisfies \[\nu = 1\] and $\ell(x) \to 1$ as $x \to \infty$ in (\ref{eq:abs RV}).
On the other hand, obtaining a sharp estimate of the strong mixing coefficient is highly nontrivial. The best estimate we can find in literature is     Lemma 15.3.1 of \citet{kulik2020heavy}  adapted from the results of \citet{pham1985some}. Specifically, assuming that the index $\zeta > 3$, the strong mixing coefficient has the bound
\begin{equation}\label{eqn:alphanbound}
\alpha(n) = O\{n^{-(\zeta - 1)(1 - \varepsilon)/(2-\varepsilon) + 1}\},
\end{equation}
where $\varepsilon \in (0,1)$ is a constant that can be taken arbitrarily small. Note that $m_n \sim 1/\bar F(x_n)$, condition (DM1) is satisfied if $\bar F(x_n) \to 0$, $n\bar F(x_n) \to \infty$, $r_n \to \infty$, and
\begin{equation}\label{eqn:rncondition}
r_n\bar F(x_n) + r_n^{2-(\zeta-1)(1-\varepsilon)/(2-\varepsilon)}/\bar F(x_n) \to 0.
\end{equation}
In addition, by a similar argument used in (15.3.33) of \citet{kulik2020heavy}, condition (DM2) is satisfied if
\begin{equation*}
\sum_{i=0}^\infty i|a_i|^\varsigma < \infty
\end{equation*}
for some $\varsigma \in (0,1)$. Since $a_i \sim ci^{-\zeta}$, there exists a compatible $r_n \to \infty$ such that (DM1) and (DM2) are satisfied if $\zeta - 1 > 1$ and
\begin{equation*}
2-(\zeta-1)(1-\varepsilon)/(2-\varepsilon) < -1.
\end{equation*}
By choosing $\varepsilon > 0$ arbitrarily small, the above indicates that $\zeta > 7$. In contrast, condition (Z1) from the tail adversarial stability framework only requires that $\zeta > 4$. It is remarkable that the strong mixing framework requires an additional condition (DM3), which typically leads to more restrictive conditions on how extremal the tail can be. For example, the condition $m_n = o(n^{1/3})$ in (DM3) requires that $n\{\bar F(x_n)\}^3 \to \infty$, while in comparison condition (Z2) from the tail adversarial stability framework only requires that $n \bar F(x_n) \to \infty$. Note that the condition $m_n = o(n^{1/3})$ in (DM3) can be replaced by its alternative $(m_n^4/n) \sum_{i=r_n}^{m_n} \alpha(i) \to 0$ and $m_n r_n^3/n \to 0$, for which by (\ref{eqn:alphanbound}) and Karamata's theorem it suffices to have
\begin{equation*}
{r_n^{2-(\zeta-1)(1-\varepsilon)/(2-\varepsilon)} \over n\{\bar F(x_n)\}^4} \to 0\ \mathrm{and}\ {r_n^3 \over n \bar F(x_n)} \to 0.
\end{equation*}
This, together with (\ref{eqn:rncondition}), make it difficult to work out the actual condition as it depends on the nontrivial interplay between how extremal the tail can be and how fast the linear coefficients decay to zero. Since $r_n\bar F(x_n) \to 0$ and $r_n^{(\zeta-1)(1-\varepsilon)/(2-\varepsilon)-2}\bar F(x_n) \to \infty$ by (\ref{eqn:rncondition}), it is then necessary, though probably not sufficient, to have
\begin{equation*}
n\{\bar F(x_n)\}^{6-(\zeta-1)(1-\varepsilon)/(2-\varepsilon)} \to \infty\ \mathrm{and}\ n\{\bar F(x_n)\}^{1+3/\{(\zeta-1)(1-\varepsilon)/(2-\varepsilon)-2\}} \to \infty,
\end{equation*}
which is still stronger than condition (Z2) from the tail adversarial stability framework. We also remark that the condition $n\alpha_{r_n}/m_n \sim n\bar F(x_n)\alpha_{r_n} \to 0$ in (DM3) prevents $\bar F(x_n)$ from going to zero too slowly, while condition (Z2) only requires that $\bar F(x_n) \to 0$. Therefore, in addition to being more tractable, the tail adversarial stability framework can also lead to cleaner and weaker conditions on not only how strong the tail dependence can be but also how extremal the tail can be.

\subsection{Tail Empirical Distribution}\label{subsec:tailempiricaldistribution}
We in this section consider estimating the tail probability $T(x_n): = \bar{F}(x_n) = \pr(X_i > x_n)$ by its empirical version
\begin{equation*}
\widehat{T}(x_n) = n^{-1} \sum_{i=1}^n 1_{\{X_i > x_n\}}
\end{equation*}
when the threshold $x_n \to \infty$ satisfying also $\E[n \widehat{T}(x_n)]  =n \bar{F}(x_n)\rightarrow\infty$. For simplicity we assume $T(x)\sim c x^{-\nu}$ as $x\rightarrow\infty$, and hence the aforementioned condition becomes $x_n\ll n^{1/\nu}$ as $n\rightarrow\infty$ (recall we write $a_n\ll b_n$ if $a_n=o(b_n)$).  To understand the convergence rate and the associated asymptotic distribution, it requires a limit theorem on the difference
\begin{equation}\label{eq:tail emp}
\widehat{T}(x_n) - T(x_n) = n^{-1} \sum_{i=1}^n [1_{\{X_i > x_n\}} - T(x_n)].
\end{equation}
For this, by the proof of Theorem 2 in \citet{Zhang:2021}, one can show that the central limit theorem
\begin{equation*}
\left[{n \over T(x_n)\{ 1-T(x_n)\}}\right]^{1/2} \{\widehat{T}(x_n) -  T(x_n)\} \to_d N(0,\sigma^2)
\end{equation*}
as $n\rightarrow\infty$ holds for some $\sigma^2 > 0$ if the process $(X_i) \in \mathrm{TAS}_2$ along with some other mild regularity conditions. \citet{Rootzen:2009} applied the $\beta$-mixing condition and obtained a weak convergence result for the tail empirical process (introducing an  additional  parameter into \eqref{eq:tail emp})  which implies the above central limit theorem; see also \cite[Chapter 9]{kulik2020heavy}.      We   leave  a full development of functional limit theorem for tail empirical process under the tail adversarial stability framework as a future work, and restrict the discussion on the marginal central limit theorem.

  We shall   make a comparison between the TAS framework and the $\beta$-mixing framework described in \cite[Section 9.2.3]{kulik2020heavy} for heavy-tailed linear processes  (\ref{eq:linear proc}).  Assume as before that the linear process coefficients satisfy for some $\zeta>0$ that
  \begin{equation*}
a_i \sim ci^{-\zeta},\quad i\rightarrow\infty.
\end{equation*}
Below for simplicity, we  focus only on the implications on the exponent $\zeta$ and the threshold $x_n$ and omit some additional technical assumptions involved.
 For the TAS framework,   as in Section \ref{subsec:highquantileregression}, for a linear process with S$\nu$S innovations, $\nu\in (0,2)$, the process is $\mathrm{TAS}_2$ if   $\zeta > \max(2,2/\nu)$; for the more general regularly varying linear processes satisfying the assumptions of Theorem \ref{Thm:linear}, we need the stronger restriction $\zeta>\max(2, 1+2/\nu )$.    On the other hand,     to establish the central limit theorem  under the $\beta$-mixing framework as described in  \cite[Proposition 9.2.5]{kulik2020heavy}, one needs  the conditions denoted as $\mathcal{R}(r_n,x_n)$, $\beta(r_n,\ell_n)$ and $\mathcal{S}(r_n,x_n)$, where $r_n$ and $\ell_n$ are two sequences tending to infinity such that  $\ell_n \ll r_n\ll n$ as $n\rightarrow\infty$.   First, in view of \cite[Section 15.3]{kulik2020heavy}, the $\beta$-mixing condition is satisfied if $\zeta > 2+1/\nu$ with a $\beta$-mixing coefficient estimate $\beta_n=O(n^{1-(\zeta-1)(\nu-\varepsilon)/(1+\nu-\varepsilon)})=o(1)$ as $n\rightarrow\infty$, where $\varepsilon>0$ can be chosen arbitrarily small.  Now the conditions $\mathcal{R}(r_n,x_n)$ and $\beta(r_n,\ell_n)$ respectively require:
 \begin{equation} \label{eq:ell_n r_n n}
 r_n^{1/\nu} \ll x_n\ll n^{1/\nu} \quad \text{ and  }  \quad n \ell_n^{1-(\zeta-1)(\nu-\varepsilon)/(1+\nu-\varepsilon)}\ll r_n .
 \end{equation}
 According to \cite[Section 15.13]{kulik2020heavy}, the condition $\mathcal{S}(r_n,x_n)$ holds when $\zeta>\max(2/\nu,1)$.
  As a summary, the $\beta$-mixing framework minimally requires $\zeta>\max(2/\nu, 2+1/\nu)$, which is  more stringent than   the TAS requirement for S$\nu$S innovations when  $\nu \in (1/2,2)$, and more stringent than the TAS requirement for    general regularly varying innovations when $\nu>1$. The $\beta$-mixing framework also introduces a lower boundary  rate for the threshold $x_n$ which is not present in the TAS framework:  \eqref{eq:ell_n r_n n} and $\ell_n\ll r_n$ together imply that  $x_n\gg n^{(1+\nu)/ ((\zeta-1)\nu^2)} $.

We also consider the moving-maximum process (\ref{eq:max linear proc}). Assume for simplicity that the innovations are $\nu$-Fr\'echet and  again
the coefficients
\begin{equation*}
a_i \sim ci^{-\zeta}, \quad i\rightarrow\infty.
\end{equation*}  In view of Section \ref{sec:mmrevisit}, the process is $\mathrm{TAS}_2$ if
 $\zeta > 2/\nu$.  On the other hand,  by \cite[Theorems 13.4 and 13.5]{kulik2020heavy}, the $\beta$-mixing condition holds    holds  if $\zeta>3/\nu$ (more stringent than the TAS requirement) with a beta mixing coefficient estimate $\beta_n=O(n^{3-\zeta\nu})=o(1)$ as $n\rightarrow\infty$, and  the condition $\mathcal{S}(r_n,x_n)$  mentioned above also follows. The conditions $\mathcal{R}(r_n,x_n)$ and $\beta(r_n,\ell_n)$ mentioned above respectively require
  \begin{equation*}
 r_n^{1/\nu} \ll x_n\ll n^{1/\nu} \quad \text{ and  }  \quad n \ell_n^{3-\zeta\nu}\ll r_n,
 \end{equation*}
which as above imply a lower rate restriction for the threshold: $x_n\gg n^{1/(\nu(\zeta\nu-2))} $, which is not present in the TAS case.

\section{Conclusion}\label{sec:conclusion}
Although various tail dependence measures have been proposed to summarize the degree of the underlying tail dependence, few is useful for developing limit theorems of tail dependent time series. Because of this limitation on available tools, the existing literature to date still largely relies on the strong mixing condition of \citet{Rosenblatt:1956} to obtain limit theorems of tail dependent time series. However, the strong mixing condition of \citet{Rosenblatt:1956} was not originally developed to handle dependence in the tail, and as a result additional conditions that control more specifically the degree of dependence in the tail are often needed together with the strong mixing condition. Such conditions can lead to either additional restrictions on the strong mixing coefficient that cannot be easily made explicit or conditions that cannot be fully captured by the strong mixing coefficient. In addition, the supreme over two sigma algebras makes it generally a difficult task to derive a sharp estimate  of the strong mixing coefficient. Recently, \citet{Zhang:2021} proposed an alternative framework based on a new notion of tail adversarial stability, which has been shown to be useful in obtaining nontrivial limit theorems of tail dependent time series. The advantage over the classical strong mixing framework was illustrated in \citet{Zhang:2021} for the moving-maximum process of \citet{Hall:Peng:Yao:2002}. This article studies the tail adversarial stability for the class of regularly varying additive linear processes, which has also been   adopted in modeling extremal clusters and tail dependence in time series. It can be seen from our main results in Section \ref{sec:linearprocesses} that the tail adversarial stability condition can be translated into   mild conditions on the linear coefficients, which can be   weaker than those under the strong mixing framework; see for example the discussion in Section \ref{application}. Extensions to the stochastic volatility model and the max-linear processes are also considered.

%
%

\bibliographystyle{BibliographyStyleTing}
\setlength{\bibsep}{0mm}
\bibliography{BibliographyTing,BibBai}

\end{document}